\definecolor{ref}{rgb}{0.1,0.2,0.7}
\newtheorem{theorem}{Theorem}
\newtheorem{corollary}{Corollary}
\newtheorem{lemma}{Lemma}
\newtheorem{remark}{Remark}
\newtheorem{definition}{Definition}
\newtheorem{proposition}{Proposition}
\newtheorem{assumption}{Assumption}
\def\barroman#1{\sbox0{#1}\dimen0=\dimexpr\wd0+1pt\relax
  \makebox[\dimen0]{\rlap{\vrule width\dimen0 height 0.06ex depth 0.06ex}%
    \rlap{\vrule width\dimen0 height\dimexpr\ht0+0.03ex\relax 
            depth\dimexpr-\ht0+0.09ex\relax}%
    \kern.5pt#1\kern.5pt}}
\newcommand{\R}{\mathbb{R}}
\newcommand{\E}{\mathbb{E}}
\newcommand{\p}{\mathbb{P}}
\DeclareMathOperator*{\argmin}{arg\,min}
\newcommand{\bs}[1]{\boldsymbol{#1}}
\newcommand{\trash}[1]{}
\title[McKean-Vlasov Equations with common noise]{A note on the long time behaviour of stochastic McKean-Vlasov equations with common noise}
\date{\today}
\address{Raphaël Maillet, Universit\'e Paris-Dauphine \& PSL, CNRS, CEREMADE, 75016 Paris, France}
\email{maillet@ceremade.dauphine.fr}
\author{Raphael Maillet}
\begin{document}
	\maketitle
		
		\begin{abstract}

		This paper focuses on the long-term behavior of solutions to nonlinear stochastic Fokker-Planck equations driven by common noise, where the drift term has a linear dependence on the measure. These equations, which describe the evolution of probability distributions, naturally arise in the mean-field limit of interacting particle systems driven by both idiosyncratic and common noises. After proving the existence of an invariant measure under some mild conditions, we first consider the case where the confinement potential is uniformly convex. In this setting, we establish a result of uniform-in-time conditional propagation of chaos for the associated particle system. This result directly implies the uniqueness of the long-term behavior for solutions of the nonlinear stochastic Fokker-Planck equation.
		Then, we highlight a more surprising phenomenon of uniqueness recovery induced by the addition of common noise in the non-convex case, albeit under more restrictive structural assumptions. Specifically, we show that the presence of common noise leads to uniqueness and exponential convergence towards equilibrium in the absence of idiosyncratic noise. This result emphasizes the stabilizing role of common noise in systems where non-convex potentials would typically allow for multiple invariant measures.
\color{black}
		\end{abstract}

\noindent \textbf{Mathematics Subject Classification (2010)}: 
{MSC}

\noindent \textbf{Keywords}: 
{Non-linear McKean-Vlasov, Stochastic Partial Differential Equations, Interacting particle system, Common noise, Conditional propagation of chaos, Asymptotic stability.}
		
\tableofcontents
\section{Introduction}
		
We consider the following non-linear Stochastic Partial Differential Equation (SPDE) on $[0, +\infty)\times\R^d$,
\begin{equation}\label{eq:main}
	\mathrm{d}_t  m_t = \nabla \cdot \left(\frac{\sigma^2 + \sigma_0^2}{2} \nabla m_t + m_t b(t, \cdot, m_t)\right)\mathrm{d}t - {\sigma_0 \nabla m_t \cdot \mathrm{d}B^0_t}.
\end{equation}
This SPDE is posed on a filtered probability space $\left(\Omega_0, \mathcal{F}^0, \mathbb{F}^0, \mathbb{P}_0\right)$, $B^0$ is a $d$-dimensional $\mathbb{F}^0$-Brownian motion, the drift $b : [0, +\infty)\times\R^d\times\mathcal{P}\left(\R^d\right) \to \R^d$ depends on time, space and measure, $\sigma$ and $\sigma_0$ are two non-negative constants. This paper aims to study the long-time behavior of solutions of Equation \eqref{eq:main}, and more precisely, to understand the effect of the common noise on the asymptotic stability. We assume that the drift term $b$ has a specific linear structure for the measure variable, with two continuously differentiable functions $V$ and $W$ such that $b(t,x,\mu) = -\nabla V(x) -\nabla W\ast\mu(x)$, where $\ast$ stands for the convolution operator. This assumption is typical when studying long-time behavior in McKean-Vlasov type equations, see $e.g$ \cite{bashiri2020long, bolley2013uniform, delmoral2019uniform, herrmann2010non, tugaut2013convergence}. In the following, we are interested in getting existence and uniqueness results for the invariant measure of the probability measure-valued process $(m_t)_{t\geq 0}$. As the problem under consideration falls within the realm of the McKean-Vlasov type, one may expect the existence of an invariant measure and uniqueness, at least in some specific cases. In this paper, we verify whether, in the case where $W$ is convex and $V$ is uniformly convex, the introduction of common noise does not compromise the classical uniqueness results of \cite{benachour1998nonlinear}. When $V$ is not convex, the matter becomes considerably more intricate, studied so far without the presence of common noise, only partial results are known. There exist cases in which the uniqueness of the invariant measure is not satisfied. Unlike linear elliptic equations, the presence of nonlinearity leads to the existence of multiple invariant measures. Specifically, it has been proven in \cite{herrmann2010non}, following the ideas of \cite{dawson1995stochastic} that when the confinement potential is uniformly convex outside of a ball centered in the origin, admits a double-well, and the diffusion coefficient $\sigma$ is sufficiently small, there exist exactly three invariant solutions of the following equation:
\begin{align*}
	\partial_t m_t = \frac{\sigma^2}{2} \Delta m_t + \nabla \cdot(m_t(\nabla V + \nabla W\ast m_t)).
\end{align*}
Since 2019, several papers \cite{delarue2019restoring}, \cite{delarue2020selection} or \cite{delarue2021exploration} investigate the restoration of uniqueness in mean-field games derived from deterministic differential games with a large number of players by introducing an external noise. In a similar manner, this paper explores the restoration of the uniqueness of the invariant measure by introducing common noise to the system. More precisely, we prove existence and uniqueness of the invariant measure for process $(m_t)_{t\geq 0}$ in the following cases: (1) when the confinement potential $V$ is uniformly convex and the interaction potential is convex. In order to achieve such result, we prove something stronger, mainly uniform-in-time conditional propagation of chaos for the approximating interacting particle system; (2) when the potential $V$ is not convex and there is no idiosyncratic noise in the system, $i.e$ $\sigma=0$. In this case, we also get exponential rates of convergence to the invariant measure. Formally, we show the existence of $\widebar P \in \mathcal{P}( \mathcal{P}(\R^d))$, and a constant $\eta >0$, such that for all initial condition $P_0$, we get the existence of a constant $C > 0$, such that for each time $t > 0$,
\begin{align*}
	\mathrm{d}_1^{\mathcal{P}\left( \R^d\right)}\left(P_t, \widebar P\right)\leq Ce^{-\eta t},
\end{align*}
where $P_t = \mathcal{L}(m_t)$, and $\mathrm{d}_1^{\mathcal{P}\left( \R^d\right)}(\cdot, \cdot)$ stands for the Wasserstein distance. 
\\

This paper presents the first results about existence and uniqueness of the stationary solution of the stochastic non-linear Fokker-Planck Equation with additive common noise. However, obtaining uniqueness of the invariant measure in the general case of Equation \eqref{eq:main} without strong convexity assumptions on the confinement potential appears to be a challenging problem that is not completely solved in this paper. The results obtained in the present paper are a step towards understanding the long-time behavior of the solutions to Equation \eqref{eq:main}, but are far from providing a complete understanding of the asymptotic stability for that kind of equation.
~\\

\noindent \textbf{Probabilistic setting \& Motivation.} Let us consider a filtered probability space $$(\Omega^1, \mathcal{F}^1, \mathbb{F}^1, \p^1).$$ Then, we define the following product structure
\begin{align*}
	\Omega = \Omega^0\times \Omega^1, \:\: \mathcal{F}, \:\: \mathbb{F}, \:\: \p,
\end{align*}
where  $(\mathcal{F}, \p)$ is the completion of the set $(\mathcal{F}^0 \otimes \mathcal{F}^1, \p^0\otimes \p^1)$ and $\mathbb{F}$ is the right continuous augmentation of $( \mathcal{F}^0_t \otimes \mathcal{F}^1_t)_{t \geq 0}$. We also consider a $d$-dimensional Brownian motion $B^0$ supported by $(\Omega^0, \mathcal{F}^0, \p^0)$, adapted to $\mathbb{F}^0$ and another Brownian motion $B$ supported by $(\Omega^1, \mathcal{F}^1, \p^1)$, adapted to $\mathbb{F}^1$ and independent of $\mathbb{F}^0$. { More explicitly, $B^0$ is a $\mathbb{F}^0$-Brownian motion, independent of $B$ which is a $\mathbb{F}^1$-Brownian motion. }
	Let us now consider a probability measure on the space of probability measures $P_0 \in \mathcal{P}(\mathcal{P}(\R^d))$, we are able to define 
$m_0$, a $\mathcal{F}^0_0$-measurable random variable with value in the space of probability measures $\mathcal{P}(\R^d)$ and such that $\mathcal{L}(m_0) = P_0$, in the sense that for any bounded measurable function $F : \mathcal{P}(\R^d) \to \R$, $\: \E_{\mathbb{P}^0}[F(m_0)] = \langle P_0 ; F \rangle$. We can now define on the whole probability space $(\Omega, \mathcal{F}, \mathbb{F}, \mathbb{P})$ a random variable $X_0$ such that $\mathcal{L}( X_0| \mathcal{F}^0_0) = m_0$ almost surely. Let us define the stochastic process $X$ evolving in $\R^d$, supported by $(\Omega, \mathcal{F}, \mathbb{F}, \mathbb{P})$, which dynamic is given by 
\begin{equation}\label{LP}
\left\{ \begin{array}{ll}
	dX_t = -\nabla V(X_t)\mathrm{d}t - \nabla W \ast m_t(X_t) \mathrm{d}t + \sigma \mathrm{d}B_t + \sigma_0 \mathrm{d} B^0_t\\
	X_{| t =0} = X_0,
\end{array} \right.
\end{equation}
where $m_t$ stands for the conditional law of the random variable $X_t$, with respect to the $\sigma$-algebra $\mathcal{F}^0_t$. Precisely, $m_t = \mathcal{L}\left( X_t | \mathcal{F}^0_t\right)$ almost surely, and $B$ is a $d$-dimensional $\mathbb{F}^1$-Brownian motion independent of $\mathbb{F}^0$. The dynamic of the process $(m_t)_{t\geq 0}$ is well known and given by the following Lemma (see for example \cite[Vol. II]{carmona2018probabilistic} among other references).

\begin{lemma}\label{L1}
    The measure-valued process $(m_t)_{t\geq 0}$ is a solution in the weak sense of the following Stochastic Partial Differential Equation:
\begin{equation}\nonumber
		\mathrm{d}_t  m_t = \nabla \cdot \left(\frac{\sigma^2 + \sigma_0^2}{2} \nabla m_t + m_t \left( \nabla V + \nabla W \ast m_t\right)\right)\mathrm{d}t -\sigma_0 \nabla m_t \cdot \mathrm{d}B^0_t,
\end{equation}
with initial condition $\mathcal{L}\left( m_0\right) = P_0$. 
\end{lemma}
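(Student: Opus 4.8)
The plan is to derive the SPDE for the conditional law $m_t = \mathcal{L}(X_t \mid \mathcal{F}^0_t)$ by applying a conditional version of Itô's formula to the process $X$ and testing against smooth functions. First I would fix a test function $\varphi \in C^\infty_c(\R^d)$ and apply the standard Itô formula to $\varphi(X_t)$ along the dynamics \eqref{LP}. This produces a drift term involving $-\nabla V(X_t)\cdot\nabla\varphi(X_t) - \nabla W\ast m_t(X_t)\cdot\nabla\varphi(X_t)$, a second-order term $\tfrac{\sigma^2+\sigma_0^2}{2}\Delta\varphi(X_t)$ coming from the quadratic variations of both the idiosyncratic noise $\sigma B$ and the common noise $\sigma_0 B^0$ (which add since $B$ and $B^0$ are independent), and two martingale increments $\sigma\nabla\varphi(X_t)\cdot \mathrm{d}B_t$ and $\sigma_0\nabla\varphi(X_t)\cdot\mathrm{d}B^0_t$.

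The key step is then to take conditional expectation with respect to the common-noise filtration $\mathcal{F}^0_t$. Writing $\langle m_t,\varphi\rangle = \E[\varphi(X_t)\mid\mathcal{F}^0_t]$, I would argue that the conditional expectation of the $\mathrm{d}B_t$ martingale term vanishes, since $B$ is independent of $\mathbb{F}^0$ and the integrand is adapted; this is where the asymmetry between the two noises enters and is the crux of the computation. The $\mathrm{d}B^0_t$ term, by contrast, survives the conditioning because $B^0$ is $\mathbb{F}^0$-measurable, and upon conditioning the integrand $\nabla\varphi(X_t)$ is replaced by $\langle m_t,\nabla\varphi\rangle = \int\nabla\varphi\,\mathrm{d}m_t$. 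Integrating by parts formally against $m_t$, the drift terms become $\langle m_t,\, \tfrac{\sigma^2+\sigma_0^2}{2}\Delta\varphi - (\nabla V + \nabla W\ast m_t)\cdot\nabla\varphi\rangle$, which after transposing the derivatives onto $m_t$ yields exactly the divergence-form drift $\nabla\cdot\big(\tfrac{\sigma^2+\sigma_0^2}{2}\nabla m_t + m_t b(t,\cdot,m_t)\big)$ in the weak sense, while the common-noise term $\langle m_t,\sigma_0\nabla\varphi\rangle\cdot\mathrm{d}B^0_t$ integrates by parts to $-\sigma_0\langle Dm_t,\varphi\rangle\cdot\mathrm{d}B^0_t$, matching the stochastic term $-\sigma_0 Dm_t\cdot\mathrm{d}B^0_t$.

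The main obstacle is making the interchange of conditional expectation and stochastic integration rigorous, rather than formal. Specifically, one must justify that $\E[\int_0^t \sigma\nabla\varphi(X_s)\cdot\mathrm{d}B_s \mid \mathcal{F}^0_t] = 0$ and that conditional expectation commutes with the $\mathrm{d}B^0$-integral; this requires a conditional Fubini-type argument and care with the filtration structure on the product space $(\Omega,\mathcal{F},\mathbb{F},\p)$, exploiting the independence of $B$ and $\mathbb{F}^0$ together with the measurability of the integrands. I would invoke a conditional Itô formula (as developed in the common-noise literature, \emph{e.g.} \cite{carmona2018probabilistic}) to handle these interchanges cleanly, so that the derivation reduces to identifying the surviving terms. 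Since the statement explicitly cites \cite{carmona2018probabilistic}, I expect the proof to consist mainly of recalling this conditional Itô formula and verifying that the coefficients of \eqref{LP} satisfy its hypotheses, with the independence of $B$ from the common noise being the single essential ingredient.
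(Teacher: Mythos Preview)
Your proposal is correct and follows the standard derivation of the conditional-law SPDE: apply It\^o's formula to $\varphi(X_t)$, condition on $\mathcal{F}^0_t$, use the independence of $B$ from $\mathbb{F}^0$ to kill the idiosyncratic martingale, and keep the $\mathrm{d}B^0$-term with integrand averaged against $m_t$. The paper does not actually prove this lemma; it simply states the result and refers to \cite{carmona2018probabilistic}, so your sketch is essentially the argument that reference contains and is exactly what is expected here.
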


{Equation \eqref{eq:main} connects closely to the McKean-Vlasov Equation with common noise. It shows how systems of $N$-interacting particles evolve in the mean-field limit $(N \to +\infty)$. In mathematical finance, this model is particularly useful for situations like inter-bank borrowing and lending systems (see \cite{carmona2013mean} or \cite{giesecke2015large}). Studying the long-term behavior of solutions to Equation \eqref{eq:main} is important because it gives us insight of the behavior of the solution to Equation \eqref{LP}. Such models also arise in applications such as population displacements, including bird flocking, where the common noise is interpreted as environmental noise (see, e.g., \cite{coghi2016propagation, choi2019cucker}).}

~\\

\noindent \textbf{Literature.} Stochastic Partial Differential Equations in the more general form,
\begin{align}\label{eq:eqgen}
\mathrm{d}_t m_t = \Big[ \sum_{i,j} \nabla^2_{ij}(a_{ij}(t, \cdot, m_t)m_t) +{\rm div} \left( m_t b(t, \cdot, m_t)\right)\Big]\mathrm{d}t -\sigma_0(t,\cdot,m_t) \nabla m_t \cdot \mathrm{d}B^0_t,
\end{align}
have been extensively studied in recent decades as they naturally arise in several applications. Equation \eqref{eq:eqgen} is linked to the stochastic scalar conservations law of the form
\begin{align}\label{eq:strato}
	\mathrm{d}_t m_t + \nabla\cdot(\sigma_0(\cdot, u_t)u_t)\circ \mathrm{d}W_t = 0,
\end{align}
where $\circ$ stands for the Stratonovich stochastic integral. In the case where $\sigma_0(x,\mu) = \sigma_0(\mu(x))$, meaning that the diffusion coefficient depends in the measure in a local way, this class of equations has been introduced in \cite{lions2013scalar} paving the way to several papers dealing with well-posedness of solutions of \eqref{eq:strato} in various frameworks \cite{lions2014scalar, gess2014scalar, friz2016stochastic, gess2017stochastic, fehrman2019well}. Uniqueness of the solutions to \eqref{eq:main} is a well-known result in the class of solutions admitting a square integrale density with respect to the Lesbegue measure, see \cite{kurtz1999particle}, and has been shown recently without any further moments assumptions in \cite{coghi2019stochastic}.

 In a slightly different context, a series of papers demonstrated the well-posedness for a large class of Stochastic Differential Equations similar to \eqref{eq:main}, called \textit{Mean Reflected Stochastic Differential Equations}, see \cite{briand2018bsdes}, \cite{briand2021particles} and \cite{briand2020forward}. More precisely, \cite{briand2021particles} and \cite{briand2020forward} state conditional propagation of chaos under regularity conditions on the drift and diffusion terms. These equations naturally appear when considering interacting particle systems with constraints on the empirical measure of the systems and then the study of such equations is particularly important for example for applications to Mean Field Games.  

In this paper, we focus on the following specific stochastic non-linear Fokker-Planck Equation with common noise:
\begin{align}\label{eq:SMKVCN}
	\mathrm{d}_t  m_t = \nabla \cdot \left(\frac{\sigma_0^2 + \sigma^2}{2} \nabla m_t + m_t b(t, \cdot, m_t)\right)\mathrm{d}t  -\sigma_0 \nabla m_t \cdot \mathrm{d}B^0_t.
\end{align}
 
As mentioned before, extensive research has been conducted on the equation in question, and recent studies have made notable contributions to understanding its properties. For example \cite{hammersley2021weak} explores the existence and uniqueness of solutions for McKean-Vlasov Stochastic Differential Equations (SDEs) with common noise. Similarly, \cite{marx2021infinite} proposes a regularization approach in an infinite-dimensional setting for the McKean-Vlasov equation with Wasserstein diffusion, enhancing the understanding of solutions' regularity properties. Additionally, \cite{kumar2022well} investigates the well-posedness and numerical methods for McKean-Vlasov equations with common noise, providing valuable insights on the stability and convergence of computational approaches for solving these equations.

{ To the best of our knowledge, the asymptotic behavior of the measure-valued flow $(m_t)_{t\geq 0}$ solution to equation  \eqref{eq:SMKVCN} remains largely unexplored, even in cases where the drift \( b \) is linear with respect to the measure variable}. In the case without common noise, $\sigma_0=0$, where $m$ is a deterministic flow of measures, past research has focused on various aspects of the solutions of \eqref{LP}, including existence, uniqueness (\cite{mckean1966class}, \cite{funaki1984certain}, \cite{graham1996asymptotic}), and stability. Over the past two decades, significant advancements have been made in understanding the convergence to equilibrium for solutions of the deterministic McKean-Vlasov equation. For example, see \cite{carrillo2003kinetic} or \cite{carrillo2006contractions} for proofs of an exponential convergence rate to equilibrium under strict convexity conditions on the potentials $V$ and $W$. The case without strict convexity assumptions is more intricate. Nevertheless, through a thorough examination of the dissipation of the Wasserstein distance, \cite{bolley2013uniform} showed an exponentially fast convergence to equilibrium in a weakly convex case. Recently, involving a coupling method issued from \cite{lindvall1986coupling}, it has been shown using nice concentration properties from \cite{eberle2016reflection} that the convergence to equilibrium holds with an exponential speed in the case of a confinement potential that is only convex far from the origin, as seen in \cite{durmus2020elementary}. The latter shows uniform in time propagation of chaos property, as introduced in \cite{kac1956foundations} and \cite{sznitman1991topics}, allowing one to conclude the uniqueness of the invariant measure and provide a rate of convergence to equilibrium. 
~\\

\noindent \textbf{Organisation of the paper.} This paper has three main parts. In Section \ref{sec:se2}, we study the existence of an invariant measure for the process $(m_t)_{t \geq 0}$, with dynamic given by Equation \eqref{eq:main}, and provide conditions on the potentials $V$ and $W$ for the existence of such an invariant measure. We also give moment estimates for the invariant measures. In Section \ref{se:se3}, we study the uniqueness of invariant measures with a uniformly convex confinement potential $V$, adapting known results without common noise. We also discuss the Ornstein-Uhlenbeck process with common noise, where the invariant measure can be explicitly described. Moreover, we demonstrate uniform-in-time propagation of chaos and convergence to equilibrium. Section \ref{sec:se4} explores the same topic for non-convex potential $V$ when $\sigma = 0$ and we prove that common noise can help to restore the uniqueness of the invariant measure. Technical proofs are provided in Appendix \ref{Appendix}.
~\\

\noindent \textbf{Definition and Notation.} Throughout the paper, for a Polish space $E$ we write $\mathcal{P}(E)$ for the space of Borel probability measures on $E$ equipped with the topology of weak convergence and the corresponding Borel $\sigma$-algebra. We also denote by $\langle \cdot \: ; \: \cdot\rangle$ the duality product on $\mathcal{P}(E)$.  In this paper, we consider a stochastic process $(m_t)_{t\geq 0}$ with value in the space of probability measures $\mathcal{P}\left(\R^d\right)$. We denote by $P_t$ the law $\mathcal{L}\left(m_t\right)$ of $m_t$, for $t \geq 0$, which is a probability measure on the space of probability measures. Then $(m_t)_{t \geq 0}$ is a continuous $\mathcal{P}\left(\mathbb{R}^d\right)$-valued process, and $P=(P_t)_{t \geq 0}$ belongs to $C\left([0,+\infty[ ; \mathcal{P}\left(\mathcal{P}\left(\mathbb{R}^d\right)\right)\right)$, the space of continuous functions from $[0, +\infty[$ to $\mathcal{P}\left(\mathcal{P}\left(\mathbb{R}^d\right)\right)$. 

{Whenever there exists a distance $d$ so that $(E,d)$ is a metric space, 
we call ${\mathcal P}_p(E)$, for any $p>0$, the collection of elements $\mu \in {\mathcal P}_p(E)$ such that 
\begin{equation*}
	\exists x_0 \in E : \int_E d(x_0,x)^p \mu(\mathrm{d}x) < +\infty.
\end{equation*}
In fact, the integral above is finite or not, whatever the choice of $x_0$. We then} define for any $p\geq 1$, the $p$-Wasserstein distance on 
$\mathcal{P}_p(E)$ as
\begin{equation*}
	\mathrm{d}_p^E(\mu, \nu) := \inf_{\pi \in \Pi(\mu, \nu)} \left(\int_{E\times E} d(x,y)^p \: \pi(\mathrm{d}x, \mathrm{d}y)\right)^{1/p},
	\quad \mu,\nu \in {\mathcal P}_p(E), 
\end{equation*}
where $\Pi(\mu, \nu)$ stands for the set of all couplings of $\mu$ and $\nu$ (i.e., all the joint
probability measures on $E \times E$ with $\mu$ and $\nu$ as first and second marginals). Also for any probability measure $\mu \in \mathcal{P}(\R^d)$ and $p \geq 1$ we denote by $L^p(\mu)$ the set of functions $f : \R^d \to \R^d$ such that
\begin{equation*}
	\| f \|_{L^p(\mu)}^p := \int_{\R^d} |f(x)|^p \mu(\mathrm{d}x) < +\infty. 
\end{equation*} 

In this paper, we mainly use a probability-based approach, often switching between the measure-valued stochastic process $m$ and its probabilistic counterpart $X$, which solves Equation $\eqref{LP}$. At this point, it is worth noting that we are concerned with a stochastic process $(m_t)_{t\geq 0}$ that takes values in the space of probability measures $\mathcal{P}(\R^d)$. This means that at each time $t >0$, we are dealing with measures on the space of probability, rather than on the underlying space $\R^d$. We define the notion of an invariant measure in $\mathcal{P}(\mathcal{P}(\R^d))$, which is a probability measure that remains invariant under the evolution of the stochastic process. These definitions are essential for our analysis and are detailed below.
\begin{definition}\label{D1}
	Let us consider a random variable $X$ defined on the filtered probability space $( \Omega, \mathcal{F},\mathbb{F}, \mathbb{P})$, and $P \in \mathcal{P}_2(\mathcal{P}_2(\R^d))$. According to Lemma 2.4 in \cite{carmona2018probabilistic}, for $\mathbb{P}^0-a.e.$ $\omega_0\in\Omega^0$, $X(\omega_0, \cdot)$ is a random variable on $(\Omega^1, \mathcal{F}^1, \mathbb{P}^1)$. By defining $\mathcal{L}^1(X) : \Omega^0 \ni \omega_0 \mapsto \mathcal{L}(X(\omega_0, \cdot))$, we get a random variable from $(\Omega^0, \mathcal{F}^0, \mathbb{P}^0)$ into $\mathcal{P}(\R^d)$, providing a conditional law of $X$ given $\mathcal{F}^0$. Finally, we say that $\mathcal{L}( X ) = P$ whenever $\mathcal{L}^1(X)$ is distributed with respect to $P$.
\end{definition}

\begin{definition}[Invariant measure]\label{D2}
We say that $\widebar P \in \mathcal{P}_{2}(\mathcal{P}_{{2}}(\R^d))$ {is} an invariant measure for the process $(m_t)_{t\geq 0}$ {when the latter is regarded as taking values in ${\mathcal P}_2({\mathbb R}^d)$}, if the law of $m_t$ is independent of $t$, when $m_0$ is distributed according to $\widebar P$, 
{i.e.}, for all continuous and bounded function $\phi \in \mathcal{C}_b(\mathcal{P}_{{2}}(\R^d))$
	\begin{align*}
		 {\E_0[\phi(m_t)]} = \E_0[\phi(m_0)] = \int_{\mathcal{P}_{{2}}(\R^d)} \phi(m) \widebar P(\mathrm{d}m), \quad \forall t > 0. 
	\end{align*}
Moreover, in the following, we say (with a slight abuse of notation) that a stochastic process $X$ on the probability space $\left(\Omega, \mathcal{F}, \mathbb{F} = \left(\mathcal{F}_t\right)_{t\geq 0},  \mathbb{P}\right)$ admits an invariant measure in $\mathcal{P}_2(\mathcal{P}_2(\R^d))$ if and only if, the measure-valued stochastic process $m_t = \mathcal{L}\left(X_t | \mathcal{F}^0_t\right)$ admits an invariant measure.
\end{definition}
Of course, if $\bar P$ is an invariant probability distribution, then the process $(m_t)_{t\geq 0}$ with initial condition $\bar P$, has the same law as the process  $(m_{t+T})_{t\geq 0}$ for any $T > 0$.  This follows directly from the fact that $m_0$ and $m_T$ have the same law combined with the weak Markov property.
\color{black}

\begin{definition}\label{D4}
	For any function $f$, with at most linear growth, such that $(x,y) \mapsto f(|x-y|)$ defines a distance on $\R^d$, we define the following distance on $\mathcal{P}_1(\R^d)$
	\begin{equation}
		\mathrm{d}_f^{\R^d}(\mu, \nu) := \inf_{\pi \in \Pi(\mu, \nu)} \left(\int_{\R^d\times \R^d} f(|x-y|)) \: \pi(\mathrm{d}x, \mathrm{d}y)\right).
	\end{equation}
Moreover, we define on $\mathcal{P}_1(\mathcal{P}_1(\R^d))$, 
	\begin{equation}\nonumber
		\mathrm{d}_f^{\mathcal{P}(\R)}( P,Q) = \inf_{\Gamma \in \Pi(P,Q)} \int_{\mathcal{P}_1(\R^d)} \mathrm{d}_f^{\R^d}( \mu, \nu) \Gamma( \mathrm{d}\mu, \mathrm{d}\nu),
	\end{equation}
	for any $P,Q \in \mathcal{P}_1(\mathcal{P}_1(\R^d))$. 
\end{definition}

\begin{remark}
We will frequently use the fact that $\mathrm{d}^E_f$ is a deformation of the Wasserstein distance, while emphasizing that these two distances are equivalent. The equivalence of these distances will play a key role in our analysis. For further details on the introduction and use of such deformed distances, we refer the reader to the work of Eberle \cite{eberle2016reflection}.
\end{remark}

\noindent \textbf{Our contribution.} { Let us recall that in this paper, we ony focus on drifts of the type 
\begin{equation*}
	b(x,\mu) = -\nabla V(x) -\nabla W \ast \mu\left(x\right), \quad x \in {\mathbb R}^d, \ \mu \in {\mathcal P}(\R^d), 
\end{equation*}
for $V$, $W : \R^d \to \R$. This work aims to establish a framework for analyzing the long-time behavior of solutions to \eqref{eq:main}. Specifically, we demonstrate the existence of invariant measures for the process $(m_t)_{t \geq 0}$, governed by the dynamics \eqref{eq:main}, under relatively mild conditions. Furthermore, we show that when the confining potential $ V$ satisfies strong conditions (uniform convexity), we can obtain the uniqueness of the invariant measure. The proof of this uniqueness result is done by establishing uniform-in-time conditional propagation of chaos for the natural particle approximation system, using the well-known method of synchronous coupling. This approach provides a first result on the uniqueness of the invariant measure for the solution to \eqref{eq:main}

A significant part of this paper focuses on the case where the potential is non-convex. The final contribution addresses the long-time behavior of systems with non-convex potentials (convex at infinity) and a linear interaction function. Without common noise, uniqueness of the invariant measure may fail. However, we prove that common noise, in the absence of idiosyncratic noise, restores uniqueness. We explicitly describe an invariant measure and show its uniqueness for any arbitrary small level of common noise. We also prove exponential convergence under sufficiently strong attractive forces between particles—a regime where uniqueness typically fails without common noise.

The proof uses reflection coupling, adapted to handle non-convexity and the finite-dimensional structure of the common noise. Similar results exist for nonlinear equations (see, e.g., \cite{angeli2023mckean}), but our case is more challenging due to the finite-dimensional structure of the common noise and the fact that the natural state space is infinite-dimensional. This result does not claim that finite-dimensional common noise universally restores uniqueness but provides examples where this phenomenon occurs.
}\color{black}

	\subsection{The process $m$ as a mean field limit}
	
Thanks to the definition of the previous subsection, we are now ready to give an interpretation of the process $(m_t)_{t\geq 0}$ in terms of mean-field limit for interacting particle system. Let us consider $P_0 \in \mathcal{P}_2(\mathcal{P}_2(\R^d))$, let also $N \geq 1$ be an integer and $( X_0^{1,N}, \dots, X_0^{N,N})$, $N$ random variables which are conditionally independent and identically distributed with respect to $\mathcal{F}^0_0$, such that $\mathcal{L}( X^{i,N}_0) = P_0$ for all $i \in \{ 1 , \dots , N\}$. We now define the following interacting particle system
	\begin{equation}\nonumber
		\left\{ \begin{array}{ll}
			\mathrm{d}X^{i,N}_t = -\nabla V(X^{i,N}_t) - N^{-1}\sum_{j=1}^N\nabla W( X^{i,N}_t - X^{j,N}_t) \mathrm{d}t + \sigma \mathrm{d}B^i_t + \sigma_0 \mathrm{d} B^0_t,\\
			X^{i,N}_{| t = 0} = X^{i,N}_0, \quad \forall i \in \left\{1, \dots,  N\right\},
		\end{array}\right.
	\end{equation}
where the $B^i$ are independent $d$-dimensional $\mathbb{F}^1$-Brownian motion which are independent of $\mathbb{F}^0$. Then, we consider the mean-field limit system $(\widebar X^1, \dots, \widebar X^N)$ driven by
\begin{equation}\nonumber
		\left\{ \begin{array}{ll}
			\mathrm{d}\widebar X^{i}_t = -\nabla V(\widebar X^{i}_t) - \nabla W\ast m_t\left( \widebar X^i_t\right) \mathrm{d}t + \sigma \mathrm{d}B^i_t + \sigma_0 \mathrm{d} B^0_t,\\
			\widebar X^{i}_{| t = 0} = \widebar X^{i}_0, \quad \forall i \in \left\{1, \dots,  N\right\},
		\end{array}\right.
	\end{equation}
where $\left(\widebar X_0^i\right)_i$ are conditionally independent and identically distributed (i.i.d) and random variables with respect to $\mathcal{F}^0_0$, such that $\mathcal{L}\left( \widebar X^i_0\right) = P_0$, for all $i \in \left\{ 1, \dots, N\right\}$. Our framework is exactly the same as the classical one for mean-field games system with common noise, see \cite[Vol. II]{carmona2018probabilistic}. However here, the law of the initial conditions is random. Then, conditioning concerning the $\sigma$-algebra $\mathcal{F}^0_0$, we get back to a more classical framework where the initial condition is a deterministic measure. More precisely, as stated in \cite[Vol. II]{carmona2018probabilistic}, under sufficient regularity conditions on the transport part $b$ mainly Lipschitz continuity with respect to the space and measure variables, we get that for any fixed $t \geq 0$,
\begin{equation}\nonumber
		\lim_{N \to +\infty} \E[ | \widebar X^i_t - X^{i,N}_t |^2 ] + \E[\mathrm{d}_2^{\R^d}( m^N_t, m_t)] = 0,
	\end{equation}
where $\mathrm{d}_2^{\R^d}(\cdot,\cdot)$ stands for the classical Wasserstein distance on $\R^d$, and $m^N_t := N^{-1}\sum_{i=1}^N \delta_{X^{i,N}_t}$ is the empirical measure of the interacting particle system. 

		\section{Existence of an invariant measure for the stochastic flow of measures.}\label{sec:se2}
		
		Let us consider a stochastic process $(m_t)_{t\geq 0}$ with value in the space of probability measures $\mathcal{P}\left(\R^d\right)$, and with dynamic given by \eqref{eq:main}. Then, $m$ is a weak solution of 
\begin{align}
	\mathrm{d}_t  m_t = \nabla \cdot \Big(\frac{\sigma^2 + \sigma_0^2}{2} \nabla m_t + m_t (\nabla V + \nabla W \ast m_t)\Big)\mathrm{d}t -\sigma_0 \nabla m_t \cdot \mathrm{d}B^0_t.
\end{align}
More precisely, for all $t \geq 0$ and $\varphi \in C_c^{\infty}(\mathbb{R}^d)$,
\begin{align*}
\mathrm{d}\langle m_t, \varphi\rangle=\langle m_t, L_{m_t} \varphi \rangle \: \mathrm{d} t + \sigma_0\langle m_t,(\nabla \varphi)^{\top}\rangle \: \mathrm{d} B^0_t, 
\end{align*}
where for any probability measure $m \in \mathcal{P}(\R^d)$, the operator $L_{m}$ acts on a smooth function $\varphi$ of compact support by
$$
L_{m} \varphi = - (\nabla V  + \nabla W\ast m ) \cdot \nabla \varphi+\frac{\sigma_0^2+\sigma^2}{2} \Delta \varphi,
$$	
where $\nabla, \Delta$ respectively stands for the gradient and Laplacian operator, while $\cdot$ denotes the usual inner product in $\R^d$, and for any $\varphi \in C_c^{\infty}(\mathbb{R}^d)$ and any probability measure $m$,
\begin{align*}
    \langle m ; \varphi\rangle = \int_{\R^d} \varphi \:\mathrm{d}m.
\end{align*}  
In this section, we aim to give conditions on the potentials $V$ and $W$ to ensure the existence of an invariant measure for the process $(m_t)_{t\geq 0}$. Let us now consider the following assumptions:
{
\begin{assumption}\label{A1}
	The drift $V : \R^d \to \R$ is twice differentiable in $\R^d$. Moreover, 
	\begin{itemize}
	\item[\rm (A5.1)]$V$ is confining in the sense that there exists a function $\kappa : [0; +\infty) \to \R$ 
		 {such that} 
    \begin{equation}\nonumber  
        \left( \nabla V(x) - \nabla V(y)\right)\cdot\left(x-y\right) \geq \kappa(| x-y|) |x-y|^2.
    \end{equation}
		{with}
		\begin{equation*}
		\lim\sup_{r \to +\infty}\kappa(r) > 0 \quad 
		\textrm{\rm and} \quad 
			\int_0^1 r \kappa(r)^{-} \mathrm{d} r<\infty, 
		\end{equation*}
		where $\kappa^{-} = \max(0, -\kappa)$. 
	\item[\rm (A.5.2)] $\nabla V$ is Lipschitz continuous with constant $L_V > 0$. 
\end{itemize}
\end{assumption}
}

\begin{assumption}\label{A2}
	$W$ is twice differentiable. Moreover, 
	\begin{itemize}
		\item[{\rm (A2.1)}] $W$ is symmetric, i.e., $W(x)=W(-x)$ for all $x \in \mathbb{R}^d$;
		\item[{\rm (A2.2)}] $\nabla W$ is $L_W$-Lipschitz continuous.	
	\end{itemize}
\end{assumption}

In the following, we work under the set of assumptions \ref{A1} \& \ref{A2}. The assumption regarding confinement potential $V$ primarily ensures convexity at infinity, which helps keep the process within a compact set with a high probability. We can moreover note that this implies the existence of $m_V > 0$ and $M_V \geq 0$, such that 
\begin{equation}\nonumber
    \left( \nabla V(x) - \nabla V(y)\right)\cdot\left(x-y\right) \geq m_V|x-y|^2 - M_V.
\end{equation}

{
The two following propositions stand. 
\begin{proposition} 
\label{prop:existence:!}
Under Assumptions~\ref{A1} and~\ref{A2} and for an initial condition $X_0$ satisfying 
${\mathbb E}[\vert X_0 \vert^2]< \infty$, 
the conditional McKean-Vlasov equation 
\eqref{LP} has a unique 
${\mathbb F}$-progressively measurable solution
$(X_t)_{t \geq 0}$, with continuous trajectories, such that, for 
all $T>0$, ${\mathbb E} [\sup_{0 \le t \le T} \vert X_t\vert^2] < \infty$. 
\end{proposition}

{\begin{proof}
The proof consists of a straightforward fixed point argument, using the Lipschitz properties of $\nabla V$ and $\nabla W$. We refer to~\cite[Vol. II, Chap. 2]{carmona2018probabilistic}. 
\end{proof}}
}

{By the superposition principle for conditional McKean-Vlasov equations (see 
\cite{lacker2020superposition}), we deduce that existence and uniqueness also hold true for the 
stochastic Fokker-Planck equation: 
\begin{proposition}
Let Assumptions~\ref{A1} and~\ref{A2} be in force. Then for an initial condition $m_0$ satisfying  
$${\mathbb E}_{\mathbb{P}^0} \left[\int_{{\mathbb R}^d} \vert x \vert m_0({\mathrm d}x) \right]< \infty,$$
the stochastic Fokker-Planck equation 
\eqref{eq:main} has a unique solution $(m_t)_{t \geq 0}$
in the space of ${\mathbb F}^0$-progressively measurable processes 
with values in ${\mathcal P}_2(\R^d)$. Moreover, this solution satisfies 
$${\mathbb E}_{\mathbb{P}^0}\left[\sup_{0 \leq t \leq T} \int_{{\mathbb R}^d} \vert x \vert ^2
m_t({\mathrm d}x)\right] < \infty.$$
\end{proposition} 
{
\begin{proof} 
The existence of a solution directly follows from the existence part of the statement in 
Proposition \ref{prop:existence:!}, which corresponds to Proposition 1.2 in 
\cite{lacker2020superposition}. However, establishing uniqueness is more challenging. To address this, we rely on Theorem 1.3 in 
\cite{lacker2020superposition}. In summary, any weak solution $(\widetilde m_t)_{t \geq 0}$ 
to \eqref{eq:main} induces a weak solution $ (\widetilde X_t)_{t \geq 0}$ 
to the McKean-Vlasov equation \eqref{LP}, where the idiosyncratic noise $\widetilde B$ becomes part of the solution. It holds that $\widetilde m_t = {\mathcal L}(\widetilde X_t \vert {\mathcal F}_T^0)$. Moreover, by the strong uniqueness of \eqref{LP}, a relevant form of the Yamada-Watanabe theorem applies, implying that 
${\mathcal L}(\widetilde X_t \vert {\mathcal F}_T^0)$ must equal 
${\mathcal L}_1(X_t)$, where $X = (X_t)_{t \geq 0}$ is the solution given by 
Proposition \ref{prop:existence:!}.
\end{proof} 
}
In this section, we start with a result regarding uniform-in-time control of the process $(X_t)_{t\geq 0}$ with initial condition in $\mathcal{P}_2(\mathcal{P}_2(\R^d))$ dynamic given by \eqref{LP}. To prove the existence of an invariant measure for $(m_t)_{t\geq 0}$ solution of \eqref{eq:main}, we will use the concept of intrinsic derivative for a functional defined on a space of measure, see for example \cite[Def 2.1]{cardaliaguet2019master}. 
{
\begin{definition}\label{D:diff} Throughout the definition, ${\mathcal P}_2({\mathbb R}^d)$ is equipped with $\mathrm{d}_2^{{\mathbb R}^d}$.
Let us define $\mathcal{C}_b^2(\mathcal{P}_2(\mathbb{R}^d))$ as the collection of continuous and bounded functions $\Phi: \mathcal{P}_2(\mathbb{R}^d) \to \mathbb{R}$ with the following properties:  
\begin{itemize}
    \item There exists a unique jointly continuous function $\partial_m \Phi: (m,x) \in \mathcal{P}(\mathbb{R}^d) \times \mathbb{R}^d \to \partial_m \Phi(m,x) \in \mathbb{R}^d$, at most of quadratic growth in $x$ for any $m$, such that 
    \[
    \lim _{h \to 0} \frac{\Phi(m+h(m^{\prime}-m))-\Phi(m)}{h}=\int_{\mathbb{R}^d} \partial_m \Phi(m, v)(m^{\prime}-m)(\mathrm{d} v),
    \]
    for all $m, m^{\prime} \in \mathcal{P}(\mathbb{R}^d)$ and
    \[
    \int_{\mathbb{R}^d} \partial_m \Phi(m, v) m(\mathrm{d} v)=0, \quad m \in \mathcal{P}(\mathbb{R}^d);
    \]

    \item For any $m \in {\mathcal P}_2({\mathbb R}^d)$, the mapping $x \mapsto \partial_m \Phi(m, x)$ is differentiable, with the gradient being denoted $D_m \Phi(m, x) := \nabla_x \partial_m \Phi(m, x) $; the mapping $(m,x) \mapsto D_{m} \Phi(m,x)$ is jointly continuous in $(m,x)$ and at most of linear growth in $x$ uniformly in $m$ in bounded subsets of ${\mathcal P}_2({\mathbb R}^d)$;

    \item For any $x \in \mathbb{R}^d$, every component of the $\mathbb{R}^d$-valued function $m \mapsto D_m \Phi\left(m, x\right)$ satisfies the same conditions as in the first bullet point, resulting in a mapping $(m,x,y) \in {\mathcal P}_2({\mathbb R}^d) \times {\mathbb R}^d \times {\mathbb R}^d \mapsto D_m^2 \Phi(m, x, y) \in \mathbb{R}^{d \times d}$, which is jointly continuous (in the three arguments) and at most of quadratic growth in $(x,y)$, uniformly in $m$ in bounded subsets of ${\mathcal P}_2({\mathbb R}^d)$;

    \item For any $m \in \mathcal{P}_2(\mathbb{R}^d)$, the function $x \in {\mathbb R}^d \mapsto D_m \Phi(m,x)$ is differentiable; the Jacobian, denoted $(m,x) \mapsto D^2_{xm} \Phi(m, x)$, is jointly continuous in $(m, x)$ and at most of linear growth in $x$, uniformly in $m$ in bounded subsets of ${\mathcal P}_2({\mathbb R}^d)$.
\end{itemize}
\end{definition}
}}

\begin{proposition}\label{Lacker}
Considering a measure valued process $(m_t)_{t\geq 0}$, which dynamic is given by \eqref{eq:main}, and defining $P_t = \mathcal{L}\left(m_t\right)$, we have that for any bounded and twice differentiable function $F \in \mathcal{C}_{b}^{2}(\mathcal{P}(\mathbb{R}^{d}), \R)
 $:
\begin{align}\label{eq:equationP}
	\left\langle P_{t}-P_{0}, F\right\rangle=\int_{0}^{t}\left\langle P_{s}, \mathcal{M} F\right\rangle \mathrm{d} s, \quad \forall t >0, 
\end{align}
where we define, for $m \in \mathcal{P}(\mathbb{R}^{d})$,
$$
\begin{aligned}
\mathcal{M} F(m):=& \int_{\mathbb{R}^{d}}\Big[D_{m} F(m, x) \cdot b(x, m)+\frac{\sigma^2+ \sigma_0^2}{2} \nabla\cdot (D_{m} F(m,x))\Big] m(dx) \\
&+\frac{\sigma_0^2}{2} \int_{\mathbb{R}^{2d}} \mathrm{Tr}\left[D_{mm}^{2} F\left(m, x, y\right)\right] m(dx) m(dy),
\end{aligned}
$$
and where for all measurable and bounded function $\Phi : \mathcal{P}(\R^d) \to \R$, and all $P \in \mathcal{P}(\mathcal{P}(\R^d))$, 
$$
\langle P;\Phi\rangle = \int_{\mathcal{P}\left( \R^d\right)} \Phi(m) P(\mathrm{d}m). 
$$ 
Moreover, $\widebar P$ is an invariant measure if and only if
\begin{align}\label{eq:InvMeasure}
	\left\langle \widebar P, \mathcal{M} F\right\rangle = 0, \qquad \forall F \in \mathcal{C}^2_b( \mathcal{P}(\R^d), \R). 
\end{align}
\end{proposition}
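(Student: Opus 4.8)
The plan is to read off \eqref{eq:equationP} from an Itô (chain-rule) formula for the measure-valued process $(m_t)$, and then to deduce the stationarity characterization \eqref{eq:InvMeasure} from uniqueness of that forward equation. I would first establish the chain rule on the dense subclass of cylindrical functionals $F(m)=\phi(\langle m,\varphi_1\rangle,\dots,\langle m,\varphi_n\rangle)$, with $\phi$ smooth with bounded derivatives and $\varphi_1,\dots,\varphi_n\in C_c^\infty(\R^d)$, and then extend to a general $F\in\mathcal{C}_b^2(\mathcal{P}(\R^d))$ by approximation. For such $F$ one computes directly the Lions derivatives $D_m F(m,x)=\sum_i\partial_i\phi\,\nabla\varphi_i(x)$ and $D_{mm}^2F(m,x,y)=\sum_{i,j}\partial^2_{ij}\phi\,\nabla\varphi_i(x)\otimes\nabla\varphi_j(y)$. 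Since each coordinate $Y^i_t:=\langle m_t,\varphi_i\rangle$ satisfies the scalar Itô equation $\mathrm{d}Y^i_t=\langle m_t,L_{m_t}\varphi_i\rangle\,\mathrm{d}t+\sigma_0\langle m_t,(\nabla\varphi_i)^\top\rangle\cdot\mathrm{d}B^0_t$ taken from the weak form of \eqref{SMKV}, the finite-dimensional Itô formula applied to $\phi(Y^1_t,\dots,Y^n_t)$ yields a drift $\sum_i\partial_i\phi\,\langle m_t,L_{m_t}\varphi_i\rangle+\frac{\sigma_0^2}{2}\sum_{i,j}\partial^2_{ij}\phi\,\langle m_t,\nabla\varphi_i\rangle\cdot\langle m_t,\nabla\varphi_j\rangle$ together with the martingale $\sigma_0\sum_i\partial_i\phi\,\langle m_t,(\nabla\varphi_i)^\top\rangle\cdot\mathrm{d}B^0_t$. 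Rewriting $\langle m_t,L_{m_t}\varphi_i\rangle$ with $b=-\nabla V-\nabla W\ast m$ and using $\mathrm{Tr}(a\otimes b)=a\cdot b$, the drift is recognised to be exactly $\mathcal{M}F(m_t)$; in particular the cross term $\frac{\sigma_0^2}{2}\int\int\mathrm{Tr}[D_{mm}^2F]\,m_t(\mathrm{d}x)m_t(\mathrm{d}y)$ arises precisely as the quadratic covariation of the common-noise martingale parts, which is the only place where the second-order measure derivative enters.

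Once the chain rule $\mathrm{d}F(m_t)=\mathcal{M}F(m_t)\,\mathrm{d}t+\sigma_0\int_{\R^d}D_m F(m_t,x)\,m_t(\mathrm{d}x)\cdot\mathrm{d}B^0_t$ is in hand, I would take expectations. The boundedness of $D_m F$ built into $\mathcal{C}_b^2$, together with the first-moment estimates of Section~\ref{sec:se2} (needed so that $\int D_m F\cdot b\,\mathrm{d}m$ is integrable despite the linear growth of $b$), ensures that $\mathcal{M}F$ is integrable against $P_s$ and that the stochastic integral is a true martingale. Its expectation then vanishes, giving $\E[F(m_t)]-\E[F(m_0)]=\int_0^t\E[\mathcal{M}F(m_s)]\,\mathrm{d}s$, which is exactly \eqref{eq:equationP}.

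For the invariance characterization, one direction is immediate: if $\widebar P$ is invariant then $P_t\equiv\widebar P$, so $t\,\langle\widebar P,\mathcal{M}F\rangle=\int_0^t\langle\widebar P,\mathcal{M}F\rangle\,\mathrm{d}s=0$ for every $t$, forcing \eqref{eq:InvMeasure}. For the converse, if \eqref{eq:InvMeasure} holds then the constant flow $Q_t:=\widebar P$ trivially solves \eqref{eq:equationP} with $Q_0=\widebar P$; the genuine flow $P_t=\mathcal{L}(m_t)$ started from $\mathcal{L}(m_0)=\widebar P$ also solves it, so uniqueness of the forward equation — imported from the well-posedness (uniqueness in law) of \eqref{SMKV} recalled from \cite{dawson1995stochastic} and \cite{coghi2019stochastic}, equivalently from the Markov property of $(m_t)$ — yields $P_t=\widebar P$ for all $t$, i.e. $\widebar P$ is invariant.

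I expect the main obstacle to be the rigorous passage from cylindrical functionals to arbitrary $F\in\mathcal{C}_b^2$: one must approximate $F$ simultaneously with its first and second Lions derivatives and control the convergence of each of the three terms of $\mathcal{M}F$, since approximation of $F$ alone is not enough. A clean alternative is to invoke directly the Itô--Wentzell--Lions formula for conditional measure flows from \cite{carmona2018probabilistic}, which already encodes these three terms; the cylindrical computation then only serves to identify the abstract generator with the explicit $\mathcal{M}$. The second delicate point is the converse implication in the characterization, which is genuinely not formal: it rests on uniqueness for \eqref{eq:equationP}, so I would make sure the cited well-posedness is stated strongly enough to give uniqueness of the measure-valued flow for each fixed initial law.
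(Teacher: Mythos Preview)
Your derivation of \eqref{eq:equationP} via cylindrical functionals and the finite-dimensional It\^o formula is correct and is essentially the content behind the result the paper cites (Section~1.2 of \cite{lacker2020superposition}); the paper simply invokes that reference rather than reproving it. The easy direction of the invariance characterization is also handled the same way.

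The gap is in your converse argument. You compare the constant flow $Q_t:=\widebar P$ with the genuine flow $P_t=\mathcal{L}(m_t)$ as two solutions of \eqref{eq:equationP} and appeal to ``uniqueness of the forward equation --- imported from the well-posedness (uniqueness in law) of \eqref{SMKV}''. This conflation is not justified: uniqueness in law for the SPDE \eqref{SMKV} only gives uniqueness of $P_t$ among flows that \emph{arise as the law of some solution of \eqref{SMKV}}. The constant flow $\widebar P$ is a solution of the linear Fokker--Planck-type equation \eqref{eq:equationP} on $\mathcal{P}(\mathcal{P}(\R^d))$, but it is not a priori the law of any $\mathcal{P}(\R^d)$-valued process solving \eqref{SMKV}. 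Bridging this requires a \emph{superposition principle}: given a solution of \eqref{eq:equationP}, produce a process $(\mu_t)$ solving \eqref{SMKV} with $\mathcal{L}(\mu_t)$ equal to the given flow.

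This is exactly what the paper does. It checks the integrability condition $\int_{\mathcal{P}(\R^d)}\|b(\cdot,m)\|_{L^2(m)}^2\,\widebar P(\mathrm{d}m)<\infty$ (using the Lipschitz assumptions on $\nabla V,\nabla W$ and the $L_2$ bound on $\widebar P$ from Proposition~\ref{T1}), and then invokes the superposition principle of \cite{lacker2020superposition} (Theorem~1.5) to obtain a process $(\mu_t)$ with dynamics \eqref{SMKV} and $\mathcal{L}(\mu_t)=\widebar P$ for all $t$. Only then does weak uniqueness for \eqref{SMKV} close the argument. Your final paragraph correctly flags that the converse ``rests on uniqueness for \eqref{eq:equationP}'', but the cited SPDE well-posedness results do not deliver that uniqueness by themselves; the superposition step is the missing ingredient.
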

The proof of Proposition \ref{Lacker} is based on \cite[Section 1.2]{lacker2020superposition} and postponed to the Appendix \ref{App:A0}. 

	\subsection{The existence result} 
	
We now present a result about the existence of an invariant measure for the equation of interest, adapting classical results for the McKean-Vlasov equation without common noise where the flow of probability measures, represented as $(m_t)_{t\geq 0}$ is deterministic. 
 We begin this section with the following Lemma, the proof of which is classical and postponed to Appendix \ref{App:A1}.
\begin{lemma}\label{le:L2}
    Under Assumptions \ref{A1} \& \ref{A2}, let us consider $P_0 \in\mathcal{P}_2(\mathcal{P}_2(\R^d))$.
 Then, denoting by $(X_t)_t$ the associated stochastic process with initial condition $\mathcal{L}\left(X_0\right) = P_0$ in the sense of Definition \ref{D1}, and dynamic given by \eqref{LP}, we have the following uniform in time moment control:
\begin{equation}\nonumber
        \sup_{t > 0} \E\left[ |X_t|^2\right] < +\infty.
    \end{equation}
\end{lemma}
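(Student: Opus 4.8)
The plan is to control $g(t) := \E[|X_t|^2]$ directly via Itô's formula and reduce the statement to a scalar Grönwall inequality. Applying Itô's formula to $|X_t|^2$ along \eqref{LP}, and using that $B$ and $B^0$ are independent $d$-dimensional Brownian motions,
\[
\mathrm d|X_t|^2 = \Big(-2 X_t\cdot\nabla V(X_t) - 2 X_t\cdot\nabla W\ast m_t(X_t) + (\sigma^2+\sigma_0^2)d\Big)\mathrm dt + 2 X_t\cdot(\sigma\,\mathrm dB_t + \sigma_0\,\mathrm dB^0_t).
\]
Since $P_0\in L_2(\mathcal P(\R^d))$ gives $g(0)<\infty$ and the well-posedness of \eqref{LP} guarantees finite second moments on compact time intervals, I would first localise with the stopping times $\tau_n=\inf\{t\ge0:|X_t|\ge n\}$, take expectations of the stopped equation to kill the martingale part, and let $n\to\infty$ by Fatou. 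This reduces the problem to bounding the two drift contributions.

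For the confinement term, Assumption \ref{A1} yields (as recorded after the assumptions) the estimate $(\nabla V(x)-\nabla V(y))\cdot(x-y)\ge m_V|x-y|^2 - M_V$; taking $y=0$ and absorbing the linear term $x\cdot\nabla V(0)$ with Young's inequality gives, for any $\delta>0$, $x\cdot\nabla V(x)\ge(m_V-\delta)|x|^2 - C_\delta$, so that $-2\E[X_t\cdot\nabla V(X_t)]\le -2(m_V-\delta)g(t)+2C_\delta$.

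The interaction term is the crux. To exploit the symmetry of $W$ (Assumption \ref{A2}), I would introduce a random variable $\tilde X_t$ that is, conditionally on $\mathcal F^0_t$, an independent copy of $X_t$ with law $m_t$, so that $\E[X_t\cdot\nabla W\ast m_t(X_t)]=\E[X_t\cdot\nabla W(X_t-\tilde X_t)]$. Exchanging the roles of $X_t$ and $\tilde X_t$ and using that $\nabla W$ is odd (a consequence of $W(x)=W(-x)$, which also forces $\nabla W(0)=0$) gives the symmetrised identity $\E[X_t\cdot\nabla W(X_t-\tilde X_t)]=\tfrac12\E[(X_t-\tilde X_t)\cdot\nabla W(X_t-\tilde X_t)]$. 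Since $\nabla W$ is $L_W$-Lipschitz and vanishes at $0$, one has $|(X_t-\tilde X_t)\cdot\nabla W(X_t-\tilde X_t)|\le L_W|X_t-\tilde X_t|^2$, and because $\tilde X_t$ is a conditional copy, $\E[|X_t-\tilde X_t|^2]=2\big(g(t)-\E[|\overline m_t|^2]\big)\le 2 g(t)$, with $\overline m_t$ the barycentre of $m_t$. This controls the interaction contribution by $2 L_W g(t)$.

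Combining the three estimates yields $g'(t)\le(-2m_V+2\delta+2L_W)\,g(t)+C$. Choosing $\delta$ small, the coefficient is strictly negative in the dissipative regime where the confinement dominates the interaction, and Grönwall's lemma then gives $g(t)\le g(0)e^{-2ct}+C/(2c)$, whence $\sup_{t>0}g(t)<\infty$. The main obstacle is precisely the interaction term: the symmetrisation above is what converts the a priori sign-indefinite nonlinear force into a quantity controlled by the variance of $m_t$. When $W$ is convex this term is even nonnegative and may simply be discarded, while in the general Lipschitz case one relies on the coercivity constant $m_V$ dominating $L_W$ to close the inequality uniformly in time.
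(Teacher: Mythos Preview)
Your approach is essentially the same as the paper's: apply It\^o to $|X_t|^2$, split off $\nabla V(0)$ to use the coercivity of $V$, and handle the interaction by introducing a conditionally independent copy $\tilde X_t$ so that the evenness of $W$ yields the symmetrised identity $2\E[X_t\cdot\nabla W\ast m_t(X_t)]=\E[(X_t-\tilde X_t)\cdot\nabla W(X_t-\tilde X_t)]$. The paper's proof stops at exactly this point and declares the rest ``straightforward''; you are in fact more careful, both in localising the martingale and in spelling out that closing the Gr\"onwall inequality uniformly in time requires either convexity of $W$ (so the symmetrised term has a sign and can be dropped) or a smallness condition $L_W<m_V$ so that $-2m_V+2L_W<0$. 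This implicit restriction is consistent with how the lemma is actually used later in the paper (convex $W$ in Section~\ref{se:se3}, and $L_W<\bar L$ in Section~\ref{se:se4}).
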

Now, we are ready to state the following Proposition:
\begin{proposition}\label{T1}
    Under Assumptions \ref{A1} \& \ref{A2}, the dynamical system given by \eqref{eq:main} admits at least an invariant measure $\widebar P \in \mathcal{P}_2(\mathcal{P}_2(\R^d))$, $i.e$ such that
	\begin{align*}
		\int_{\mathcal{P}\left( \mathbb{R}^d\right)}\int_{\mathbb{R}^d}|x|^2 m(\mathrm{d}x) \widebar P(\mathrm{d}m) < +\infty.
	\end{align*}
\end{proposition}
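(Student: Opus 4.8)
The plan is to run a Krylov--Bogoliubov (time-averaging) argument, using the uniform moment bound of Lemma \ref{le:L2} to secure tightness and the generator characterization \eqref{eq:InvMeasure} of Proposition \ref{Lacker} to identify the limit as an invariant measure. Fix a convenient initial condition in $L_2(\mathcal{P}(\R^d))$, for instance the Dirac mass at $\delta_0$, and let $(m_t)$ be the associated solution of \eqref{SMKV} with $P_t = \mathcal{L}(m_t)$. Introduce the occupation measures
\[
\widebar P_T := \frac{1}{T}\int_0^T P_t\,\mathrm{d}t \in \mathcal{P}(\mathcal{P}(\R^d)), \qquad T > 0,
\]
defined via $\langle \widebar P_T, \Phi\rangle = \frac{1}{T}\int_0^T \langle P_t, \Phi\rangle\,\mathrm{d}t$ for bounded measurable $\Phi$. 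I will show that any weak limit point of $(\widebar P_T)_T$ is invariant and lies in $L_2(\mathcal{P}(\R^d))$.

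First I would establish tightness on the double-layer space $\mathcal{P}(\mathcal{P}(\R^d))$. Consider the coercive functional $\Phi(m) := \int_{\R^d}|x|^2\,m(\mathrm{d}x)$, which is lower semicontinuous for the weak topology on $\mathcal{P}(\R^d)$ (being a supremum of the bounded continuous maps $m\mapsto \int (|x|^2\wedge n)\,\mathrm{d}m$). Its sublevel sets $K_R = \{m : \Phi(m)\le R\}$ are closed and, by Markov's inequality, uniformly tight, hence compact in $\mathcal{P}(\R^d)$ by Prokhorov. Lemma \ref{le:L2} gives $C := \sup_{t>0}\langle P_t, \Phi\rangle = \sup_{t>0}\E[|X_t|^2] < +\infty$, so $\langle \widebar P_T, \Phi\rangle \le C$ and therefore $\widebar P_T(K_R^c)\le C/R$ uniformly in $T$. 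This yields tightness, and Prokhorov's theorem produces a sequence $T_n\to+\infty$ and $\widebar P \in \mathcal{P}(\mathcal{P}(\R^d))$ with $\widebar P_{T_n}\to \widebar P$ weakly. Since $\Phi\ge 0$ is lower semicontinuous, the portmanteau theorem gives $\langle \widebar P, \Phi\rangle \le \liminf_n \langle \widebar P_{T_n}, \Phi\rangle \le C$, i.e. $\widebar P \in L_2(\mathcal{P}(\R^d))$ in the sense of Definition \ref{D3}.

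It then remains to verify the invariance criterion \eqref{eq:InvMeasure}. Fix $F \in \mathcal{C}_b^2(\mathcal{P}(\R^d),\R)$. Applying \eqref{eq:equationP} to $g(t) := \langle P_t, F\rangle$ gives $g(t) - g(0) = \int_0^t\langle P_s, \mathcal{M}F\rangle\,\mathrm{d}s$, and averaging over $[0,T]$ yields
\[
\langle \widebar P_T, \mathcal{M}F\rangle = \frac{1}{T}\int_0^T \langle P_s, \mathcal{M}F\rangle\,\mathrm{d}s = \frac{\langle P_T, F\rangle - \langle P_0, F\rangle}{T}.
\]
Because $F$ is bounded, the right-hand side is at most $2\|F\|_\infty/T \to 0$. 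So if I can pass to the limit along $T_n$ on the left, I obtain $\langle \widebar P, \mathcal{M}F\rangle = 0$ for every $F$, and Proposition \ref{Lacker} then concludes that $\widebar P$ is invariant.

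The main obstacle is precisely this passage to the limit, because $\mathcal{M}F$ is continuous but \emph{not} bounded: the term $\int D_m F(m,x)\cdot b(x,m)\,m(\mathrm{d}x)$ grows linearly in the first moment, since $b(x,m) = -\nabla V(x)-\nabla W\ast m(x)$ has linear growth (by the Lipschitz bounds in \ref{A1} and \ref{A2}, together with $\nabla W(0)=0$ coming from the symmetry of $W$), while $D_m F$, $D_xD_m F$ and $D^2_{mm}F$ are bounded. The remedy is a uniform-integrability estimate: one checks $|\mathcal{M}F(m)| \le C_F\bigl(1 + \Phi(m)^{1/2}\bigr)$, so that $\int |\mathcal{M}F|^2\,\mathrm{d}\widebar P_{T_n} \le C_F\bigl(1 + \langle \widebar P_{T_n},\Phi\rangle\bigr) \le C_F(1+C)$ uniformly in $n$. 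This uniform $L^2$ bound, combined with the weak continuity of $m\mapsto \mathcal{M}F(m)$, upgrades the weak convergence $\widebar P_{T_n}\to\widebar P$ to $\langle \widebar P_{T_n}, \mathcal{M}F\rangle \to \langle \widebar P, \mathcal{M}F\rangle$, giving $\langle \widebar P, \mathcal{M}F\rangle = 0$ and hence invariance. Verifying this weak continuity of $\mathcal{M}F$ and the growth bound above are the two technical points on which the argument hinges; everything else is standard Krylov--Bogoliubov machinery.
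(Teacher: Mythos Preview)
Your argument is correct and follows exactly the paper's Krylov--Bogoliubov strategy: time-average $P_t$, extract a weak limit point via tightness from Lemma~\ref{le:L2}, identify it as invariant through \eqref{eq:equationP}--\eqref{eq:InvMeasure}, and obtain the $L_2$ estimate by lower semicontinuity of $m\mapsto\int|x|^2\,m(\mathrm{d}x)$. You are in fact more careful than the paper in justifying the passage $\langle \widebar P_{T_n},\mathcal{M}F\rangle \to \langle \widebar P,\mathcal{M}F\rangle$ (which the paper leaves implicit); your uniform-$L^2$ bound is the right fix, with the minor caveat that the continuity of $\mathcal{M}F$ you invoke should be taken in the $W_1$-topology on $\mathcal{P}(\R^d)$ rather than the plain weak topology---harmless here, since the uniform second moment makes the sets $K_R$ also $W_1$-compact.
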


\begin{proof}[Proof of Proposition \ref{T1}]
Let us fix $P_0 \in \mathcal{P}_2(\mathcal{P}_2(\R^d))$, $i.e$ such that:
\begin{equation}\nonumber
	\int_{\mathcal{P}\left( \mathbb{R}^d\right)}\int_{\mathbb{R}^d}|x|^2 m(\mathrm{d}x) P_0(\mathrm{d}m) < +\infty.
\end{equation}

	\textit{Step 1}.   Let us now consider a random flow of probability measures $(m_t)_{t\geq 0}$ with dynamic given by \eqref{eq:main} and initial condition $P_0 \in \mathcal{P}_2(\mathcal{P}_2(\R^d))$. At each time $t \geq 0$, we denote by $P_t$ the law of this measure-valued process.  For $T > 0$, we define the process $\left( Q_T\right)_{T > 0}$ with value in  $\mathcal{P}_2(\mathcal{P}_2(\R^d))$ by
	\begin{equation}\nonumber
		Q_T = T^{-1}\int_0^T P_t \mathrm{d}t.
	\end{equation}
	This defines a sequence of probability measures on $\mathcal{P}( \mathbb{R}^d)$. Let us show that $(Q_T)_{T > 0}$ admits at least a convergent subsequence. Thanks to Prohorov theorem, we only need to show that this sequence is tight. For $R > 0$, let us consider the set 
\begin{equation}\nonumber
	K_R = \Big\{ m \in \mathcal{P}( \mathbb{R}^d), \: \int_{\mathbb{R}^d} |x|^2 m(\mathrm{d}x) \leq R\Big\}. 
\end{equation}
This set is compact for the topology of weak convergence in $\mathcal{P}( \mathbb{R}^d)$. 
Now, for $T > 0$,
\begin{equation}\nonumber
	\begin{aligned}[t]
		Q_T(K_R) &= T^{-1}\int_0^T P_t(K_R) \mathrm{d}t \\
		& = T^{-1}\int_0^T \int_{\mathcal{P}\left( \mathbb{R}^d\right)} \mathds{1}_{\left\{m \in K_R \right\}} P_t(\mathrm{d}m) \mathrm{d}t \\
		& = T^{-1}\int_0^T \E\left[ \mathds{1}_{\left\{m_t \in K_R \right\}}\right] \mathrm{d}t \\
		& \geq 1 - \frac{c}{R},
	\end{aligned}
\end{equation}
where $c = \sup_{t > 0} \E\left[ |X_t|^2\right] < + \infty$, thanks to Lemma \ref{le:L2}. Hence, for any $\varepsilon > 0$, there exists $R_{\varepsilon} > 0$ such that $Q_T\left(K_{R_\varepsilon}\right) > 1-\varepsilon$, for all $T > 0$. This gives tightness of the sequence and then the existence of a converging subsequence that we keep denoting by $(Q_T)_{T > 0}$ in the following. \\

\textit{Step 2}. Let us denote by $Q$ the limit of this converging subsequence, and show that $Q$ is an invariant measure. Let $T > 0$, and $F \in C_{b}^{2}(\mathcal{P}(\mathbb{R}^{d}), \R)$, we obtain 
\begin{equation}\nonumber
	\begin{aligned}[t]
		\langle Q_{T}, \mathcal{M} F\rangle = & \int_{\mathcal{P}(\mathbb{R}^d)} \Big( \int_{\mathbb{R}^{d}}[D_{m} F(m, x) \cdot b(x, m)+\frac{\sigma^2+ \sigma_0^2}{2} \nabla\cdot (D_{m} F(m,x))] m(\mathrm{d}x) \\
		& +\frac{\sigma_0^2}{2} \int_{\mathbb{R}^{2d}} \mathrm{Tr}[D_{mm}^{2} F\left(m, x, y\right)] m(\mathrm{d}x) m(\mathrm{d}y)\Big) Q_{T}(\mathrm{d}m) \\
		& = {T}^{-1}\int_0^{T} \int_{\mathcal{P}(\mathbb{R}^d)} \Big( \int_{\mathbb{R}^{d}}[D_{m} F(m, x) \cdot b(x, m)+\frac{\sigma^2+ \sigma_0^2}{2} \nabla\cdot (D_{m} F(m,x))] m(\mathrm{d}x) \\
		& +\frac{\sigma_0^2}{2} \int_{\mathbb{R}^{2d}} \mathrm{Tr}[D_{mm}^{2} F(m, x, y)] m(dx) m(dy)\Big) P_t(\mathrm{d}m) \\
		& = {T}^{-1} \langle P_{T}-P_{0}, F\rangle,
	\end{aligned}
\end{equation} 
{where at the last line, we use the chain rule proved in \cite[Vol. I, Chap. 5]{carmona2018probabilistic} and \cite[Vol. II, Chap. 4]{carmona2018probabilistic}}.
Hence, for all $F \in C_{b}^{2}(\mathcal{P}(\mathbb{R}^{d}))$, $\left\langle Q, \mathcal{M} F\right\rangle = 0$. This ensures that $Q$ is an invariant measure. \\

\textit{Step 3.} Finally we move on to the moment estimate. We know that there exists a subsequence of $Q_T$ which converges weakly to $\widebar P$.
Moreover,
	\begin{equation}\nonumber
	\int_{\mathcal{P}\left( \R^d\right)}\int_{\R^d} | x |^2 m(\mathrm{d}x) Q_T(\mathrm{d}m) = \frac{1}{T}\int_0^T\int_{\mathcal{P}\left(\R^d\right)} \int_{\R^d} | x |^2 m(\mathrm{d}x)P_t(\mathrm{d}m) \mathrm{dt},
\end{equation}
and \begin{equation}\nonumber
	\sup_{T > 0}  \frac{1}{T}\int_0^T\int_{\mathcal{P}\left(\R^d\right)} \int_{\R^d} | x |^2 m(\mathrm{d}x)P_t(\mathrm{d}m) \mathrm{dt} < +\infty,
\end{equation}
 as $\sup_{t>0}\E\left[ |X_t|^2\right] < +\infty$. Moreover,  the function $m \in \mathcal{P}_2\left(\R^d\right) \mapsto \int_{\R^d}|x|^2 m(\mathrm{d}x)$ is lower semi-continuous, and then,
 \begin{equation}\nonumber
 	\int_{\mathcal{P}\left(\R^d\right)} \int_{\R^d} | x|^2 m(\mathrm{d}x) \widebar P\left( \mathrm{d}m\right) \leq \liminf _{T \to +\infty} \frac{1}{T}\int_0^T\int_{\mathcal{P}\left(\R^d\right)} \int_{\R^d} | x |^2 m(\mathrm{d}x)P_t(\mathrm{d}m) \mathrm{dt} < +\infty.
 \end{equation}
\end{proof}

		\section{The case of a uniformly convex confinement potential}\label{se:se3}

In this section, we show that the process $(m_t)_{t\geq 0}$, driven by equation \eqref{eq:main}, has a unique invariant measure under strong convexity assumptions on the confinement potential. Moreover, our method allows us to find exponential rates of convergence toward the invariant measure for a specific set of initial conditions. We again consider $P_0 \in \mathcal{P}_2(\mathcal{P}_2(\R^d))$ and a random variable $X_0$ such that $\mathcal{L}\left( X_0\right) = P_0$, following Definition \ref{D1}. Next, we study the stochastic process $(X_t)$ driven by equation \eqref{LP} with the initial condition $X_0$. In this part of the paper, we consider strict convexity assumptions on the confinement potential $V$. To be specific, we adopt the following assumptions throughout this section:
{
\begin{assumption}\label{A3} $V : \R^d \to \R$ is twice differentiable and satisfies
	\begin{itemize}
		\item[{\rm (A3.1)}] $V$ is uniformly convex, more precisely, there exists $\beta > 0$ such that:
		\begin{equation}\nonumber
			\nabla^2 V \geq \beta \: \mathrm{\mathrm{Id}}.
		\end{equation}
		\item[{\rm (A3.2)}] $\nabla V$ is Lipschitz continuous.
	\end{itemize}
\end{assumption}

\begin{assumption}\label{A4} $W : \R^d \to \R$ is twice differentiable and satisfies
	\begin{itemize}
		\item[{\rm (A4.1)}] $W$ is symmetric and convex;
		\item[{\rm (A4.2)}] $\nabla W$ is globally Lipschitz continuous, with Lipschitz constant $L_W$.
	\end{itemize}
\end{assumption}
}

 This section starts with a key result that states uniform in-time propagation of chaos. This result will then help us establish the uniqueness of the invariant measure and to give a rate of convergence to equilibrium. Thanks to the previous section, under Assumptions \ref{A3} \& \ref{A4}, a process $X$ driven by equation \eqref{LP} admits at least an invariant measure in $\mathcal{P}_2(\mathcal{P}_2(\R^d))$.
 
	\subsection{Uniform in time propagation of chaos}
In the case without common noise, the uniqueness of the invariant measure for the process $X$ has already been established see $e.g$ \cite{cattiaux2008probabilistic}, \cite{malrieu2001logarithmic}, and \cite{bolley2013uniform}.
In this section, our aim is to adapt this result to the case with common noise and obtain the uniqueness of the invariant measure for the probability measure-valued stochastic process $(m_t)_{t\geq 0}$, and exponentially fast convergence to the equilibria. 
\begin{theorem}\label{T2}
	Let us consider $P,Q \in \mathcal{P}_2(\mathcal{P}_2(\R^d))$, and two particle systems $\bs{X} = (X^1, \dots, X^N)$ and $\bs{X}^N = (X^{1,N}, \dots, X^{N,N})$ with dynamics,
\begin{equation}\label{eq:IPS1}
	\mathrm{d} X^i_t = -\nabla V(  X^i_t) \mathrm{d}t - \nabla W\ast m_t( X^i_t) \mathrm{d}t + \sigma \mathrm{d}B^i_t + \sigma_0\mathrm{d}B^0_t , \quad \forall i \in \left\{ 1, \dots, N\right\},
\end{equation}
and 
\begin{equation}\label{eq:IPS2}
			\mathrm{d} X^{i,N}_t = -\nabla V( X^{i,N}_t) \mathrm{d}t - \frac{1}{N}\sum_{j=1}^N\nabla W(X^{i,N}_t - X^{j,N}_t) \mathrm{d}t + \sigma \mathrm{d}B^i_t + \sigma_0\mathrm{d}B^0_t , \quad \forall i \in \left\{ 1, \dots, N\right\}. 
\end{equation}
where $(X^{i}_0)_{i \in \left\{ 1, \dots, N\right\}}$ are i.i.d such that $\mathcal{L}( \widebar X^{1}_0) = P$ in the sense of Definition \ref{D1}, and where the same holds for the second system with $\mathcal{L}( X^{1,N}_0) = Q$. Then, under Assumption \ref{A3}, there exists a constant $C > 0$ depending only on the dimension $d$ and the probability measures $P$ and $Q$, such that
\begin{equation}\nonumber
	\E\Big[ \mathrm{d}_2^{\R^d}( m^{P,N}_{\bs{X}_t}, m^{Q,N}_{\bs{X}^N_t}) \Big]\leq C\Big(e^{-\beta t} + \frac{1}{2\beta\sqrt{N-1}}\Big),
\end{equation}
where $m^{P,N}_{ \bs{X}_t} = \frac{1}{N}\sum_{i=1}^N \delta_{ X^i_t} $ and $m^{Q,N}_{\bs{X}^N_t} =  \frac{1}{N}\sum_{i=1}^N \delta_{X^{i,N}_t}$.

\end{theorem}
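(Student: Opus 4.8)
The plan is to couple the two particle systems \eqref{eq:IPS1} and \eqref{eq:IPS2} through a synchronous coupling — driving both with the same idiosyncratic Brownian motions $B^i$ and the same common noise $B^0$ — and then control the expected squared Wasserstein distance between the two empirical measures by a Gr\"onwall-type argument in which the uniform convexity constant $\beta$ from \ref{A4} produces the exponential decay $e^{-\beta t}$, while the mean-field interaction term produces the $N$-dependent fluctuation error $(N-1)^{-1/2}$. The crucial structural observation is that both systems share the common noise $B^0$, so the stochastic integrals $\sigma_0\,\mathrm{d}B^0_t$ cancel exactly in the difference $X^i_t - X^{i,N}_t$; this is precisely why the common noise does not degrade the classical stability estimate, and it is the reason the error is driven purely by the confinement and interaction drifts.

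First I would set $D^i_t := X^i_t - X^{i,N}_t$ and write, using the synchronous coupling, the pathwise equation for $\mathrm{d}|D^i_t|^2$. Applying It\^o's formula, both the $\sigma\,\mathrm{d}B^i_t$ and $\sigma_0\,\mathrm{d}B^0_t$ martingale terms drop out since they are identical in the two systems, so $|D^i_t|^2$ has zero quadratic-variation contribution and evolves by a pure drift. The confinement contribution $-(\nabla V(X^i_t)-\nabla V(X^{i,N}_t))\cdot D^i_t$ is bounded above by $-\beta|D^i_t|^2$ by uniform convexity. The interaction contribution splits into a genuine mean-field error — comparing $\nabla W \ast m_t(X^i_t)$ with the empirical average $\frac1N\sum_j \nabla W(X^{i,N}_t - X^{j,N}_t)$ — which I would decompose into a part controlled by the convexity/Lipschitz structure of $W$ and a fluctuation part measuring how far the empirical measure $m^{P,N}_{\bs X_t}$ deviates from its conditional law $m_t$. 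Summing over $i$ and averaging gives a differential inequality of the form $\frac{\mathrm{d}}{\mathrm{d}t}\E[\tfrac1N\sum_i |D^i_t|^2] \leq -2\beta\,\E[\tfrac1N\sum_i|D^i_t|^2] + \text{(fluctuation)}$.

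The main obstacle will be controlling the fluctuation term, i.e. estimating the $L^2$ distance between the empirical measure $m^{P,N}_{\bs X_t}$ of the conditionally i.i.d. McKean--Vlasov particles and their common conditional law $m_t$. Here I would condition on $\mathcal{F}^0_t$: given the common noise, the $(X^i_t)_i$ are i.i.d. with law $m_t$, so a standard variance computation for the empirical mean of $\nabla W(\cdot - X^j_t)$ — exploiting that $\nabla W$ is $L_W$-Lipschitz and that $W$ is even — yields a bound of order $L_W^2/(N-1)$ on the conditional fluctuation, which after taking expectations contributes the $1/(2\beta\sqrt{N-1})$ term. The factor $N-1$ rather than $N$ arises from removing the diagonal $j=i$ self-interaction term. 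Finally I would apply Gr\"onwall's lemma to the differential inequality, use the elementary bound $\E[\mathrm{d}_2^{\R^d}(m^{P,N}_{\bs X_t}, m^{Q,N}_{\bs X^N_t})]^2 \leq \E[\tfrac1N\sum_i |D^i_t|^2]$ coming from the coupling through the empirical measures, and absorb the initial-data dependence and the dimensional constant into $C$ to obtain the stated estimate.
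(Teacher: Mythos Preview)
Your proposal is correct and follows essentially the same route as the paper's proof: synchronous coupling so that both noise terms cancel in the difference, It\^o's formula on $|X^i_t-X^{i,N}_t|^2$, the $-\beta$ contraction from uniform convexity of $V$, a decomposition of the interaction term into a symmetrized part controlled by the convexity of $W$ and a fluctuation part bounded via the conditional-i.i.d.\ structure given $\mathcal{F}^0_t$ (yielding the $1/(N-1)$ variance), followed by Gr\"onwall and the empirical-coupling bound on $\mathrm{d}_2^{\R^d}$. The only point you leave slightly implicit is that the symmetrized interaction sum $\sum_i\Xi^{i,N}_t$ is nonpositive precisely because $W$ is convex and even, but you allude to this (``convexity/Lipschitz structure of $W$''), so there is no real gap.
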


The proof of this result closely follows the approach presented in \cite{benachour1998nonlinear} and \cite{malrieu2001logarithmic}, which shows the propagation of chaos for particle systems in cases without common noise. However, in our situation, we need to be careful with the interaction term and its dependency on the common noise. 

{
\begin{remark}
The result of Theorem \ref{T2} remains valid even without requiring that $\nabla V$ is Lipschitz-continuous, up to some higher moment control. The primary challenge lies in ensuring the existence and uniqueness of a strong solution to \eqref{eq:IPS1} and \eqref{eq:IPS2}. This can be achieved using similar arguments to those in \cite{benachour1998nonlinear}, adapted to account for the presence of common noise, provided the initial condition has a moment of order 4. Here, we leverage the Propagation of Chaos stated in Theorem \ref{T2} to establish the uniqueness of the invariant measure in $\mathcal{P}_2(\mathcal{P}_2(\R^d))$. While it is possible to relax the Lipschitz continuity assumption to achieve uniqueness in $\mathcal{P}_2(\mathcal{P}_4(\R^d))$, this is not the purpose of the current framework.
\end{remark}
}

\begin{proof} We only sketch the proof, as it follows closely the proof of \cite[Thm. 3.3]{malrieu2001logarithmic}. Let $i \in \{ 1, \dots, N\}$, using Itô formula, we get
\begin{align*}
	 \frac{\mathrm{d}}{\mathrm{d}t}| X^{i,N}_t -  X^{i}_t|^2 & = -2 (X^{i,N}_t -  X^{i}_t)\cdot(\nabla V(X^{i,N}_t) - \nabla V(X^{i}_t)) \\
	 & -\frac{2}{N}\sum_{j=1}^N ( X^{i,N}_t -  X^{i}_t ) \cdot (\nabla W( X^{i,N}_t - X^{j,N}_t) - \nabla W\ast m_t( X^{i}_t)).
\end{align*}
To control the second term, we make the following decomposition:
\begin{align*}
	&  -2 ( X^{i,N}_t -  X^{i}_t )\Big( N^{-1}\sum_{j=1}^N\nabla W( X^{i,N}_t - X^{j,N}_t) - \nabla W\ast m_t( X^{i}_t)\Big) \\
	 &= -2 ( X^{i,N}_t -  X^{i}_t)\Big( N^{-1}\sum_{j=1}^N\nabla W( X^{i,N}_t - X^{j,N}_t) - N^{-1}\sum_{j=1}^N\nabla W(  X^{i}_t -  X^{j}_t)\Big)\\
	  & \hspace{50pt}  - 2 ( X^{i,N}_t -  X^{i}_t)\Big(N^{-1}\sum_{j=1}^N\nabla W( X^{i}_t -  X^{j}_t) - \nabla W\ast m_t( X^{i}_t)\Big) \\
	&= \Xi^{i,N}_t + \Upsilon^{i,N}_t,
\end{align*}
where 
{
\begin{equation*}
\Xi^{i,N}_t = -2 ( X^{i,N}_t -  X^{i}_t)\Big( N^{-1}\sum_{j=1}^N\nabla W( X^{i,N}_t - X^{j,N}_t) - N^{-1}\sum_{j=1}^N\nabla W(  X^{i}_t -  X^{j}_t)\Big)
\end{equation*}
and 
\begin{equation*}
\Upsilon^{i,N}_t = - 2 ( X^{i,N}_t -  X^{i}_t)\Big(N^{-1}\sum_{j=1}^N\nabla W( X^{i}_t -  X^{j}_t) - \nabla W\ast m_t( X^{i}_t)\Big). 
\end{equation*}

Let us now write that for any $i \in \{1, \dots, N\}$, 
\begin{equation*}
	 \Xi^{i,N}_t = -\frac{2}{N}\sum_{j=1}^N \xi^{i,j,N}_t,
\end{equation*}
with 
\begin{equation*}
	\xi^{i,j,N}_t = ( X^{i,N}_t -  X^{i}_t)\Big(\nabla W( X^{i,N}_t - X^{j,N}_t) - N^{-1}\sum_{j=1}^N\nabla W(  X^{i}_t -  X^{j}_t)\Big).
\end{equation*}

Summing the first term over $i$, we obtain
\begin{align*}
	\frac{1}{N} \sum_{i=1}^N \Xi^{i,N}_t & = -N^{-2}\sum_{i,j=1}^N ( \xi^{i,j,N}_t + \xi^{j,i,N}_t).
\end{align*}
Moreover, for any, $i, j \in \{ 1, \dots, N\}$,
\begin{equation*}
\xi^{i,j,N}_t + \xi^{j,i,N}_t = ( X^{i,N}_t - X^{j,N}_t -  X^{i}_t +  X^j_t )( \nabla W( X^{i}_t -  X^{j}_t) - \nabla W(  X^{i}_t -  X^{j}_t)).
\end{equation*} 
Then, using the convexity assumption on $W$, we obtain
\begin{equation*}
	\frac{1}{N} \sum_{i=1}^N \Xi^{i,N}_t \leq 0. 
\end{equation*}
}
{
The control of $\Upsilon^{i,N}_t$ is a bit more intricate. First, let us note that we can write:
\begin{align*}
\Upsilon_t^{i, N}=2 \frac{1-N}{N}\left(X_t^{i, N}-X_t^i\right)\Big[\Big(\frac{1}{N-1} \sum_{j \neq i} \nabla W\left(X_t^i-X_t^j\right)-& \nabla W * m_t\left(X_t^i\right)\Big)\\
& +\frac{1}{N} \nabla W * m_t\left(X_t^i\right)\Big].
\end{align*}
This allows us to say that
\begin{equation}
	\E[|\Upsilon^{i,N}_t|] \leq \barroman{I} + \barroman{II},
\end{equation}
where 
\begin{align*}
	& \barroman{I} = 2\frac{N-1}{N}\E\left[ \left| X^{i,N}_t - X^i_t\right|^2\right]^{1/2}\E\left[ \left| \frac{1}{N-1} \sum_{j \neq i} \nabla W\left(X_t^i-X_t^j\right)-\nabla W * m_t\left(X_t^i\right)\right|^2\right]^{1/2}; \\
	& \barroman{II} = 2\frac{N-1}{N}\E\left[ \left| X^{i,N}_t - X^i_t\right|^2\right]^{1/2}\E\left[\left| \nabla W \ast m_t(X^i_t)\right|^2\right]^{1/2}.
\end{align*}

Let us begin with the second term $\barroman{II}$. First, we observe that for any $t > 0$, 
\begin{align*}
	\E\left[|\nabla W\ast m_t(X^i_t)|^2\right] & = \E\left[ \left|\int_{\R^d} \nabla W(X^i_t - y)m_t(\mathrm{d}y) \right|^2\right] \\
	& \leq L_W^2 \E\left[ \left| \int_{\R^d} |X^i_t|m_t(\mathrm{d}y) + \int_{\R^d} |y|m_t(\mathrm{d}y)\right|^2\right] \\
	& \leq L_W^2 \E\left[ \left| |X^i_t|  + \E[|X^i_t| \: |\mathcal{F}^0_t]\right|^2\right] \\
	& \leq 2L_W^2 \E\left[ |X^i_t|^2\right].
\end{align*} 
This gives
\begin{equation}\label{eq:II}
	\barroman{II} \leq \frac{2\sqrt{2L_W}(N-1)}{N^2}\E\left[ |X^i_t|^2\right]^{1/2}\E\left[ \left| X^{i,N}_t - X^i_t\right|^2\right]^{1/2}.
\end{equation}
For the term $\barroman{I}$, we can write for any $t > 0$
\begin{align*}
	&\E[ | \nabla W\ast m_t(X^i_t) - \frac{1}{N-1}\sum_{j=1}^N \nabla W(  X^i_t -  X^j_t)|^2] \\
    & = \E\Big[ \E\Big[ \Big| \nabla W\ast m_t(X^i_t) - \frac{1}{N-1}\sum_{j=1}^N \nabla W( X^i_t -  X^j_t)\Big|^2\Big|  X^i_t, \mathcal{F}^0_t\Big]\Big] \\
    & = \E\Big[ \mathrm{Var}\Big[  \frac{1}{N-1}\sum_{j=1}^N \nabla W(  X^i_t -  X^j_t)\Big|  X^i_t, \mathcal{F}^0_t\Big]\Big] \\
	& \leq \frac{1}{N-1}\E[\E[ | \nabla W(  X^i_t - X^j_t)|^2|  X^i_t, \mathcal{F}^0_t] ], \qquad \text{for some $j \neq i$} \\
	& \leq \frac{L_W^2}{N-1}\E[\E[ | X^i_t -  X^j_t|^{2}|  X^i_t, \mathcal{F}^0_t]] \\
	& \leq \frac{2 L_W^2}{N-1} \E[ |  X^1_t|^{2}],
\end{align*}

where the third line comes from the fact that $\E[ \nabla W(  X^i_t -  X^j_t) |  X^i_t, \mathcal{F}^0_t] = \nabla W\ast m_t(  X^i_t)$, and the fourth from the fact that the $(X^j_t)_{j\neq i}$ are  conditionally independent w.r.t. $(X^i_t, \mathcal{F}^0_t)$.  This gives
\begin{equation}\label{eq:I}
	\barroman{I} \leq \frac{4 L_W}{\sqrt{N}} \E[ |  X^1_t|^{2}]^{1/2}\E\left[ \left| X^{i,N}_t - X^i_t\right|^2\right]^{1/2}.
\end{equation}

Combining \eqref{eq:II} and \eqref{eq:I}, we obtain that there exists a constant $C_1 > 0$ independent of $t$ and $N$ such that
\begin{equation*}
	\E[|\Upsilon^{i,N}_t|] \leq \frac{C_1}{\sqrt{N-1}}\E\left[\left|X^1_t\right|^2\right]^{1/2}\E\left[\left|X^i_t - X^{i,N}_t\right|^2\right]^{1/2}.
\end{equation*}
Then, thanks to Lemma \ref{le:L2}, we obtain that there exists a constant $C > 0$, such that
\begin{align*}
	\E[|\Upsilon^{i,N}_t|]  \leq \frac{C}{\sqrt{N-1}}\E[| X^{i,N}_t - X^i_t|^2]^{1/2}.
\end{align*}
Finally, we get that
\begin{align*}
	\frac{1}{N}\sum_{i=1}^N\frac{\mathrm{d}}{\mathrm{d}t} \E[ |  X^{i}_t - X^{i,N}_t|^2] \leq & -\frac{2\beta}{N}\sum_{i=1}^N \E[ |  X^{i}_t - X^{i,N}_t|^2] \\
	& \hspace{20pt}+ \frac{C}{\sqrt{N-1} }\Big( N^{-1}\sum_{i=1}^N\E[| X^{i,N}_t -  X^{i}_t|^2]\Big)^{1/2},
\end{align*}
for some constant $C > 0$.
Let us now denote $v_N(t) = N^{-1}\sum_{i=1}^N\E[ |  X^{i}_t - X^{i,N}_t|^2]$, then we have
\begin{equation}\nonumber
	v'_N(t) \leq -2\beta v_N(t) + \frac{C}{\sqrt{N-1}} v_N(t)^{1/2}. 
\end{equation}
This gives, using Gronwall Lemma:
\begin{equation}\label{eq:PCUT}
	v_N(t)^{1/2}\leq e^{-\beta t}v_N(0)^{1/2} + \frac{C}{2\beta\sqrt{N-1}}.
\end{equation}
Moreover, $v_N(0) = N^{-1}\sum_{i=1}^N\E[ |  X^{i}_0 - X^{i,N}_0|^2] \leq \E[ |X^{i}_0|^2 + |X^{i,N}_0|^2]$ is bounded uniformly in $N$,
and  as $\E[ \mathrm{d}_2^{\R^d}( m^N_{\bs{X}_t}, m^N_{\bs{X}^N_t}) ]\leq v_N(t)^{1/2},$ we conclude the proof of Theorem \ref{T2}. }
\end{proof}
	
	\subsection{Uniqueness of the invariant measure}
	
The main consequence of the previous result is the uniqueness of the invariant measure for the process $(m_t)_{t\geq 0}$ driven by \eqref{eq:main}. As recalled at the beginning of the section, we have already shown that under Assumptions \ref{A1} and \ref{A2}, there exists an invariant measure $\widebar P$. From Theorem \ref{T2}, we get the following Corollary
\begin{corollary}\label{C1}
	Under Assumptions \ref{A3}, the stochastic process $(m_t)_{t\geq 0}$ admits a unique invariant measure $\widebar P \in \mathcal{P}_2\left(\mathcal{P}\left( \R^d\right)\right)$. Moreover, for each $P_0 \in \mathcal{P}_2(\mathcal{P}_2(\R^d))$, there is an exponential convergence to the invariant measure:
	\begin{equation}\nonumber
	\mathrm{d}_2^{\mathcal{P}(\R^d)}( P_t, \widebar P) \leq e^{-\beta t}\mathrm{d}_2^{\mathcal{P}(\R^d)}( P_0, \widebar P)^2.
	\end{equation}
\end{corollary}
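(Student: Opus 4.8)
The plan is to derive Corollary \ref{C1} as a consequence of the uniform-in-time propagation of chaos established in Theorem \ref{T2}, by passing to the $N \to +\infty$ limit in the particle-system estimate. First I would recall the mean-field limit statement from the introductory subsection: for conditionally i.i.d.\ initial data with conditional law $P_0$, the empirical measure $m^{P,N}_{\bs{X}_t}$ converges (in $L^1$ with respect to $\mathrm{d}_2^{\R^d}$, after conditioning on $\mathcal{F}^0_0$) to the true conditional law $m_t$ solving \eqref{LP}, and likewise for the $Q$-system. The key identity to exploit is that $\E[\mathrm{d}_2^{\R^d}(m^{P,N}_{\bs{X}_t}, m^{Q,N}_{\bs{X}^N_t})]$ is, up to the vanishing $\tfrac{1}{2\beta\sqrt{N-1}}$ term, an upper bound that is stable under the limit, while the left-hand sides converge to the genuine distance $\mathrm{d}_2^{\mathcal{P}(\R^d)}$ between the two laws $P_t = \mathcal{L}(m_t^P)$ and $P_t' = \mathcal{L}(m_t^Q)$ of the measure-valued processes.

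Concretely, I would proceed in three steps. \emph{Step 1 (uniqueness).} Take $P_0 = \widebar P$ an invariant measure (which exists by Proposition \ref{T1}) and $Q_0 = \widebar P'$ a second invariant measure; run both particle systems from conditionally i.i.d.\ data with these conditional laws. Apply Theorem \ref{T2} and let $N \to +\infty$: the coupling term $\tfrac{1}{2\beta\sqrt{N-1}} \to 0$, so the limiting bound reads $\mathrm{d}_2^{\mathcal{P}(\R^d)}(P_t, P_t') \leq C e^{-\beta t}$. Since both measures are invariant, $P_t = \widebar P$ and $P_t' = \widebar P'$ for all $t$, so letting $t \to +\infty$ forces $\mathrm{d}_2^{\mathcal{P}(\R^d)}(\widebar P, \widebar P') = 0$, hence $\widebar P = \widebar P'$. \emph{Step 2 (convergence).} Take the first system started at an arbitrary $P_0 \in L_2(\mathcal{P}(\R^d))$ and the second started at the invariant measure $\widebar P$; the same limiting procedure yields $\mathrm{d}_2^{\mathcal{P}(\R^d)}(P_t, \widebar P) \leq C e^{-\beta t}$, giving exponential convergence. \emph{Step 3 (constant).} To recover the precise prefactor stated in the corollary — namely $e^{-\beta t}\,\mathrm{d}_2^{\mathcal{P}(\R^d)}(P_0,\widebar P)^2$ rather than a generic $C$ — I would optimise the coupling of the initial conditions: choosing an optimal $\mathrm{d}_2^{\mathcal{P}(\R^d)}$-coupling of $P_0$ and $\widebar P$ to synchronise the two particle systems through shared idiosyncratic and common noise makes $v_N(0)$ converge to $\mathrm{d}_2^{\mathcal{P}(\R^d)}(P_0,\widebar P)^2$, and then \eqref{eq:PCUT} propagates this with the factor $e^{-\beta t}$.

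The main obstacle will be Step 3 and, more generally, the interchange of the $N \to +\infty$ limit with the Wasserstein distances. Two points require care. First, Theorem \ref{T2} controls the \emph{empirical measures} of two coupled finite systems, whereas the corollary concerns the laws $P_t, \widebar P \in \mathcal{P}(\mathcal{P}(\R^d))$; passing from one to the other needs the convergence $\E[\mathrm{d}_2^{\R^d}(m^N_{\bs{X}_t}, m_t)] \to 0$ and the lower semicontinuity of $\mathrm{d}_2^{\mathcal{P}(\R^d)}$ under weak limits, together with a uniform-in-$N$ second-moment bound (available from Lemma \ref{le:L2}) to guarantee uniform integrability. Second, matching the exact quadratic prefactor demands that the initial coupling realising $\mathrm{d}_2^{\mathcal{P}(\R^d)}(P_0,\widebar P)$ be lifted to a conditionally i.i.d.\ coupling of the particles' initial positions; this is where the product structure of $(\Omega, \mathcal{F}, \mathbb{F}, \p)$ and Definition \ref{D1} must be used carefully so that the common noise $B^0$ is shared while the conditional laws are optimally transported.
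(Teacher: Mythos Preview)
Your proposal is correct and follows essentially the same route as the paper: derive the corollary from the propagation-of-chaos estimate \eqref{eq:PCUT} of Theorem \ref{T2}, choose an optimal coupling of the initial conditions at the level of $\mathcal{P}(\mathcal{P}(\R^d))$, and pass to the limit $N\to\infty$. The one point you flag as the main obstacle --- lifting an optimal $\mathrm{d}_2^{\mathcal{P}(\R^d)}$-coupling of $P_0$ and $\widebar P$ to conditionally i.i.d.\ particle initial data --- is exactly what the paper isolates as Lemma \ref{MS} (a measurable selection argument producing an $\mathcal{F}^0_0$-measurable optimal transport plan $\xi$ between the random measures $m_0$ and $\widebar m_0$); with that in hand, $\E[|X^1_0 - X^{1,N}_0|^2]$ equals $\int \mathrm{d}_2^{\R^d}(\mu,\nu)\,\Gamma(\mathrm{d}\mu,\mathrm{d}\nu)$ for every $N$ (no limit needed), and infimising over $\Gamma$ gives the stated prefactor.
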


This implies the uniqueness of the invariant measure and the convergence to this equilibria for a large class of initial conditions $P_0$. 
To prove the previous result, we begin with a technical Lemma:
\begin{lemma}\label{MS}
    Let $m$ and $\rho$ be two probability measure-valued random variables which are $\mathcal{F}^0_0$-measurable. Then, there exists a random variable $\xi$ defined on the space $(\Omega, \mathcal{F}, \mathbb{P})$ and with value in $\mathcal{P}(\R^d)$, such that almost surely:
    \begin{equation}\nonumber
        \xi \in \argmin_{\pi \in \Pi\left( m, \rho\right)} \int_{\R^d}|x-y|^2 \pi(\mathrm{d}x, \mathrm{d}y).
    \end{equation}
\end{lemma}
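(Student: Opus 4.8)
The statement is a measurable-selection result: for each fixed $\omega_0$ one knows from classical optimal transport theory that the set of minimizers is nonempty, and the only genuine issue is to pick one of them in a way that depends measurably on $\omega_0$, so that $\xi$ is a bona fide random variable. (Note that the target space should be read as $\mathcal{P}(\R^d\times\R^d)$, since $\xi(\omega_0)$ is a coupling of $m(\omega_0)$ and $\rho(\omega_0)$.) The plan is to phrase the problem in terms of a set-valued map and invoke a measurable maximum / selection theorem.

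First I would record the pointwise facts. For each $\omega_0$, the coupling set $\Pi(m(\omega_0),\rho(\omega_0))\subset\mathcal{P}(\R^d\times\R^d)$ has fixed marginals, hence by Prohorov's theorem it is tight and weakly closed, thus weakly compact, and it is nonempty (it contains the product measure). The functional $\pi\mapsto\int|x-y|^2\,\pi(\mathrm{d}x,\mathrm{d}y)$ is lower semicontinuous for the weak topology, being the increasing supremum over $n$ of the continuous functionals $\pi\mapsto\int(|x-y|^2\wedge n)\,\mathrm{d}\pi$. Since a lower semicontinuous function on a nonempty weakly compact set attains its infimum, the argmin set is nonempty and compact for every $\omega_0$; this is just the classical existence of an optimal plan.

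The heart of the lemma is measurability. Equip $\mathcal{P}(\R^d\times\R^d)$ with the weak topology, under which it is Polish, and consider the correspondence $\Phi:\omega_0\mapsto\Pi(m(\omega_0),\rho(\omega_0))$. I would show that its graph $\{(\omega_0,\pi):\pi\in\Phi(\omega_0)\}$ is measurable in $\mathcal{F}^0\otimes\mathcal{B}(\mathcal{P}(\R^d\times\R^d))$. Fix a countable convergence-determining family $(\varphi_k)_k$ of bounded continuous functions on $\R^d$. Then $\pi\in\Phi(\omega_0)$ if and only if $\int\varphi_k(x)\,\pi(\mathrm{d}x,\mathrm{d}y)=\int\varphi_k\,m(\omega_0)(\mathrm{d}x)$ and $\int\varphi_k(y)\,\pi(\mathrm{d}x,\mathrm{d}y)=\int\varphi_k\,\rho(\omega_0)(\mathrm{d}y)$ for every $k$. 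Each such condition is measurable in $(\omega_0,\pi)$, because $\pi\mapsto\int\varphi_k\,\mathrm{d}\pi$ is continuous while $\omega_0\mapsto\int\varphi_k\,m(\omega_0)$ and $\omega_0\mapsto\int\varphi_k\,\rho(\omega_0)$ are measurable by the $\mathcal{F}^0_0$-measurability of $m$ and $\rho$. Intersecting over the countable family shows the graph is measurable, and since $\Phi$ has compact values it is a measurable correspondence.

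Finally I would apply the measurable maximum theorem (e.g.\ Aliprantis--Border, Theorem 18.19): given the measurable compact-valued correspondence $\Phi$ together with the lower semicontinuous integrand $(\omega_0,\pi)\mapsto\int|x-y|^2\,\mathrm{d}\pi$, the minimizing correspondence $\omega_0\mapsto\argmin_{\pi\in\Phi(\omega_0)}\int|x-y|^2\,\mathrm{d}\pi$ is nonempty-valued and measurable and admits a measurable selection $\xi$. This $\xi$ is the desired $\mathcal{F}^0$-measurable random variable, optimal for a.e.\ $\omega_0$ by construction. The main obstacle is purely this measurability step: establishing that $\Phi$ is a measurable correspondence and verifying the hypotheses of the selection theorem; once the graph is shown measurable, the optimal-transport and selection ingredients are standard. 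A minor technical point is the unboundedness of the cost, which is precisely why the objective is written as an increasing limit of bounded continuous functionals to obtain lower semicontinuity.
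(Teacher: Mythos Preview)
Your proposal is correct and follows essentially the same route as the paper: show that the coupling correspondence is a measurable (compact-valued) multifunction, then feed it into a measurable-argmin result and extract a selection. The paper factors through $(m,\rho)\in\mathcal{P}(\R^d)^2$ and observes that the graph of $(m,\rho)\mapsto\Pi(m,\rho)$ is closed, then invokes Lemma~12.1.7 of Stroock--Varadhan for the measurability of the argmin set, whereas you work directly over $\omega_0$, verify graph measurability via a countable determining class, and cite the Aliprantis--Border measurable maximum theorem; these are only cosmetic differences. Your explicit handling of the unbounded cost via truncation and your remark that the target space is really $\mathcal{P}(\R^d\times\R^d)$ are cleaner than the paper's somewhat casual write-up.
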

The proof of this Lemma is postponed to Appendix \ref{appen:SM} and relies on measurability arguments for set-valued functions issued from  \cite{stroock1997multidimensional}.

\begin{proof}[Proof of Corollary \ref{C1}]
	The proof of this result relies on the result and the proof of Theorem \ref{T2}, but the important difference is the choice of the initial conditions. {Let us first consider an invariant measure $\widebar P$, which existence is ensured by Proposition \ref{T1}}. Then, for $P_0 \in \mathcal{P}_2(\mathcal{P}_2(\R^d))$, we pick $\Gamma \in \Pi\left( P_0, \widebar P\right)$ which is not empty.  Let us consider a couple of probability measure valued random variables $(m_0, \widebar m_0)$, and such that $\mathcal{L}\left( (m_0, \widebar m_0)\right) = \Gamma$. It means that $\mathcal{L}(m_0) = P_0$ and $\mathcal{L}(\bar m_0) = \bar P_0$. Thanks to Lemma \ref{MS}, we know that there exists $\xi$ random variable such that almost surely,
\begin{align*}
	\xi \in \argmin_{\pi \in \Pi\left( m_0, \widebar m_0\right)} \int_{\R^d}|x-y|^2 \pi(\mathrm{d}x, \mathrm{d}y).
\end{align*}
We consider once again the particle system:
	\begin{equation}\nonumber
	\mathrm{d} X^i_t = -\nabla(  X^i_t)\mathrm{d}t - \nabla W\ast m_t(  X^i_t) \mathrm{d}t + \sigma \mathrm{d}B^i_t + \sigma_0\mathrm{d}B^0_t , \quad \forall i \in \{ 1, \dots, N\},
\end{equation}
and 
\begin{equation}\nonumber
			\mathrm{d} X^{i,N}_t = -\nabla ( X^{i,N}_t) \mathrm{d}t - \frac{1}{N}\sum_{j=1}^N\nabla W(X^{i,N}_t - X^{j,N}_t) \mathrm{d}t + \sigma \mathrm{d}B^i_t + \sigma_0\mathrm{d}B^0_t , \quad \forall i \in \{ 1, \dots, N\},
\end{equation}
where the $( X^i_0, X^{i,N}_0)$ for $i \in \{ 1, \dots, N\}$ are independent and such that $\mathcal{L}( ( X^i_0, X^{i,N}_0) | \mathcal{F}^0_0) = \xi$. 
	From Theorem \ref{T2}, and more precisely Equation \eqref{eq:PCUT}, we know that there exists a constant $C > 0$, such that:
	\begin{equation}\label{eq:propchaos}
		\E[\mathrm{d}_2^{\R^d}( m^N_{\bs{X}_t}, m^N_{\bs{X}^N_t})^2 ] \leq e^{-\beta t}\E[ | X^1_0 - X^{1,N}_0|^2] + CN^{-1/2}.
	\end{equation}
Moreover, with the particular choice we made for the initial conditions, we get that
\begin{equation}\nonumber
\begin{aligned}[t]
	\E[ | X^1_0 - X^{1,N}_0|^2] & =  \E[ \E[ | X^1_0 - X^{1,N}_0|^2|\mathcal{F}^0_0]]\\
	& = \E\left[ \int_{\R^d} |x-y|^2 \xi( \mathrm{d}x, \mathrm{d}y)\right] \\
	& = \int_{\mathcal{P}(\R^d)} \mathrm{d}_2^{\R^d}( \mu, \nu) \Gamma(\mathrm{d}\mu, \mathrm{d}\nu).
\end{aligned}
\end{equation} 
{Taking the infimum over all the transport plans $\Gamma$ between $P_0$ and $\widebar P$ in \eqref{eq:propchaos}, we get that 
\begin{equation}\label{eq:eq1}
	\E\Big[\mathrm{d}_2^{\R^d}( m^N_{\bs{X}_t}, m^N_{\bs{X}^N_t})^2 \Big] \leq e^{-\beta t} \mathrm{d}_2^{\mathcal{P}( \R^d)}( P_0, \widebar P ) + \frac{C}{\sqrt N}.
\end{equation}
Taking now the limit $N \to +\infty$, we obtain
\begin{equation*}
	\E\Big[\mathrm{d}_2^{\R^d}( \widebar m_t, m_t)^2 \Big] \leq  e^{-\beta t} \mathrm{d}_2^{\mathcal{P}( \R^d)}( P_0, \widebar P),
\end{equation*}
where $(\widebar m_t)$ is the stochastic flow of measure driven by \eqref{eq:main} with initial condition $\widebar P$ and $(m_t)_{t\geq 0}$ has the same dynamic with initial condition $P_0$. 
Finally, as 
\begin{equation}\label{eq:eq2}
	\mathrm{d}_2^{\mathcal{P}(\R^d)}(P_t, \widebar P) \leq \E\Big[\mathrm{d}_2^{\R^d}( \widebar m_t, m_t)^2 \Big], 
\end{equation}
we obtain
\begin{equation}\nonumber
	\mathrm{d}_2^{\mathcal{P}( \R^d)}( P_t, \widebar P) \leq e^{-\beta t}\mathrm{d}_2^{\mathcal{P}(\R^d)}( P_0, \widebar P).
\end{equation}
}
\end{proof}

The previous result proves that common noise does not ruin the usual convergence results. It makes sense because the common noise is finite-dimensional and mainly acts like a drift term in Equation \eqref{eq:main} and should not make the usual convergence results go haywire.
	
	\subsection{Discussion and example}

In this section, we consider an Ornstein-Uhlenbeck process with common noise, that is taking $V : x \mapsto |x|^2/2$ and $W =0$. Let $X$ be a real valued stochastic process evolving in $\R^d$, driven by the following stochastic differential equation:

\begin{equation}
	\mathrm{d}X_t = -X_t\mathrm{d}t +\sigma \mathrm{d}B_t+\sigma_0\mathrm{d}B^0_t. 
\end{equation}
Then, the process associated process $(m_t)_{t\geq 0}$ is solution of 
\begin{equation}
	\mathrm{d}_t  m_t = \nabla \cdot \left[\frac{\sigma^2 + \sigma_0^2}{2} \nabla m_t + m_t x\right]\mathrm{d}t -\sigma_0 \nabla m_t \cdot \mathrm{d}B^0_t,
\end{equation}
Moreover, we consider the following initial condition $\mathcal{L}\left( X_0\right) = P_0$ (in the sense of Definition \ref{D1}), for some $P_0 \in \mathcal{P}_2(\mathcal{P}_2(\R^d))$.  
In this particular setting, we can explicitly describe the invariant measure.\begin{proposition}\label{P4}
The unique invariant measure $\widebar P$ of the process on the space $\mathcal{P}( \mathbb{R}^d)$ is the image, by the function $x \in \mathbb{R}^d \mapsto \mathcal{N}_d( -x, \sigma^2\mathrm{Id}) \in \mathcal{P}( \mathbb{R}^d)$ of the measure $\gamma_{\sigma_0}$; where $\gamma_{\sigma_0}(\mathrm{d}x) = (2\sigma_0^2)^{-1/2}e^{-|x|^2/2\sigma_0^2}\mathrm{d}x $, where for all $\mu \in \R^d, \Sigma \in \mathcal{M}_d(\R)$, $\mathcal{N}_d\left( \mu, \Sigma\right)$ denotes a gaussian distribution in dimension $d$ centered in $\mu$ and with variance-covariance matrix $\Sigma$.  
\end{proposition}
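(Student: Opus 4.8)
The plan is to exploit that, for $V(x)=|x|^2/2$ and $W\equiv 0$, the dynamics \eqref{LP} reduce to a linear Ornstein--Uhlenbeck equation which can be integrated in closed form. First I would write, by the variation of constants formula,
\begin{equation}\nonumber
	X_t = e^{-t}X_0 + \sigma\int_0^t e^{-(t-s)}\,\mathrm{d}B_s + \sigma_0\int_0^t e^{-(t-s)}\,\mathrm{d}B^0_s,
\end{equation}
and split the right-hand side into three terms of distinct nature: the relaxing initial part $e^{-t}X_0$, the idiosyncratic integral $I_t:=\sigma\int_0^t e^{-(t-s)}\mathrm{d}B_s$, and the common-noise integral $G_t:=\sigma_0\int_0^t e^{-(t-s)}\mathrm{d}B^0_s$.

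Next I would compute the conditional law $m_t=\mathcal{L}(X_t\mid\mathcal{F}^0_t)$. The three terms play complementary roles: $G_t$ is $\mathcal{F}^0_t$-measurable, hence a deterministic shift once we condition; $I_t$ is independent of $\mathcal{F}^0_t$ and Gaussian with law $\mathcal{N}_d\!\big(0,\sigma^2\tfrac{1-e^{-2t}}{2}\,\mathrm{Id}\big)$; and the extra information contained in $\mathcal{F}^0_t$ beyond $\mathcal{F}^0_0$ bears only on the common noise, so that the conditional law of $X_0$ given $\mathcal{F}^0_t$ remains $m_0$. Conditioning then yields a Gaussian flow of measures: $m_t$ is the image of $m_0$ under $x\mapsto e^{-t}x$, convolved with the centred idiosyncratic Gaussian and translated by $G_t$. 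In particular, if $m_0=\mathcal{N}_d(\xi_0,\tfrac{\sigma^2}{2}\mathrm{Id})$ for some $\mathcal{F}^0_0$-measurable mean $\xi_0$, the conditional covariance collapses, since $e^{-2t}\tfrac{\sigma^2}{2}+\sigma^2\tfrac{1-e^{-2t}}{2}=\tfrac{\sigma^2}{2}$, and one gets $m_t=\mathcal{N}_d\!\big(e^{-t}\xi_0+G_t,\tfrac{\sigma^2}{2}\mathrm{Id}\big)$.

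With this Gaussian form in hand I would identify $\widebar P$ by verifying invariance directly. Choosing $m_0=\mathcal{N}_d(\xi_0,\tfrac{\sigma^2}{2}\mathrm{Id})$ with $\xi_0$ distributed according to the centred Gaussian $\gamma_{\sigma_0}=\mathcal{N}_d(0,\tfrac{\sigma_0^2}{2}\mathrm{Id})$, which is the stationary law of the scalar Ornstein--Uhlenbeck process driven by $\sigma_0 B^0$, the conditional mean $e^{-t}\xi_0+G_t$ is itself stationary: indeed $\xi_0\perp G_t$ and $G_t\sim\mathcal{N}_d\!\big(0,\sigma_0^2\tfrac{1-e^{-2t}}{2}\mathrm{Id}\big)$, so $e^{-t}\xi_0+G_t\sim\gamma_{\sigma_0}$ for every $t$, while the conditional covariance stays equal to $\tfrac{\sigma^2}{2}\mathrm{Id}$. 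Consequently $\mathcal{L}(m_t)=\mathcal{L}(m_0)$, so the image of $\gamma_{\sigma_0}$ under $x\mapsto\mathcal{N}_d(-x,\tfrac{\sigma^2}{2}\mathrm{Id})$ (the sign being irrelevant since $\gamma_{\sigma_0}$ is symmetric) is an invariant measure. Finally, as $\nabla^2V=\mathrm{Id}$ and $W\equiv 0$ is even, convex and Lipschitz, Assumption \ref{A4} holds with $\beta=1$, and Corollary \ref{C1} ensures that this invariant measure is the unique one.

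The step I expect to be the main obstacle is the careful bookkeeping in the conditioning: one must rigorously justify that $I_t$ is conditionally independent of $\mathcal{F}^0_t$ and Gaussian, that $G_t$ is $\mathcal{F}^0_t$-measurable with the stated law, that the random initial measure $m_0$ enters only through its deterministic pushforward, and that $\xi_0\perp G_t$ (which follows from the independence of $\mathcal{F}^0_0$ and the future increments of $B^0$). Once these conditional-independence and measurability facts are secured, the remainder is elementary Gaussian algebra.
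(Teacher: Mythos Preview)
Your argument is correct and takes a genuinely different route from the paper. The paper works at the SPDE level: it introduces an auxiliary stationary Ornstein--Uhlenbeck process $X^0$ driven by $-\sigma_0\,\mathrm{d}B^0$, removes the common noise from the equation for $\bar m_t$ via the pushforward $\tilde m_t=(\mathrm{Id}-\sigma_0 B^0_t)_\#\bar m_t$ and the It\^o--Wentzell formula, and then checks by direct computation that a Gaussian candidate solves the resulting random Fokker--Planck equation, pushing back to identify $\bar m_t$. You instead exploit the linearity of the SDE itself, write the explicit variation-of-constants solution $X_t=e^{-t}X_0+I_t+G_t$, and compute $m_t=\mathcal{L}(X_t\mid\mathcal{F}^0_t)$ by pure conditioning, never touching the SPDE or It\^o--Wentzell. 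Your route is more elementary and makes the mechanism transparent: $G_t$ is an $\mathcal{F}^0_t$-measurable shift, $I_t$ is an independent Gaussian, and the initial law is unaffected by the extra information in $\mathcal{F}^0_t\setminus\mathcal{F}^0_0$; invariance then reduces to the one-line variance identity $e^{-2t}\cdot\tfrac{1}{2}+\tfrac{1-e^{-2t}}{2}=\tfrac{1}{2}$ applied to both the conditional covariance and the law of the random mean. The paper's approach, on the other hand, stays within the SPDE framework of the article and showcases the translation trick $\tilde m_t=(\mathrm{Id}-\sigma_0 B^0_t)_\#\bar m_t$, which is of interest beyond this linear example. Both proofs conclude uniqueness via Corollary~\ref{C1}.

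One minor remark: your variances $\tfrac{\sigma^2}{2}$ and $\tfrac{\sigma_0^2}{2}$ are the standard stationary values for the Ornstein--Uhlenbeck dynamics $\mathrm{d}X_t=-X_t\,\mathrm{d}t+\sigma\,\mathrm{d}B_t$; the statement as written in the paper carries $\sigma^2$ and $\sigma_0^2$, which appears to be a normalisation slip there rather than an error on your side.
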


The fact that in this case, we are able to exhibit the invariant measure comes from the linearity of the equation and the linearity of the confinement forces which derives from a quadratic potential, as shown in the proof.  The convergence to the equilibria holds at an exponential thanks to Theorem \ref{T2}. 

\begin{proof}[ Proof of Proposition \ref{P4}]
Let be the process $X^0$ defined dynamic given by: 
\begin{equation}\nonumber
\left\{ \begin{array}{ll}
dX^0_t = -X^0_t\mathrm{d}t -\sigma_0\mathrm{d}B^0_t \\
\mathcal{L}\left( X^0_0\right) = \mathcal{N}\left( 0, \sigma_0^2\right). 
\end{array}\right.
\end{equation}
The initial condition is such that the process $X^0$ is stationary. We now define the process $\left(\widebar m_t\right)_t$ with $\widebar m_0$, such that $\mathcal{L}(\bar m_0) = P$ as initial condition and staying
\begin{equation}\label{uakzyhejnsdf0}
\mathrm{d}_t \widebar m_t = \nabla \cdot \Big[\frac{\sigma^2 + \sigma_0^2}{2} \nabla\widebar m_t + \widebar m_t x\Big]\mathrm{d}t -\sigma_0\nabla\widebar m_t \cdot \mathrm{d}B^0_t. 
\end{equation}
Let us define $\widetilde m_t:= (\mathrm{Id}- \sigma^0 B^0_t)\sharp \widebar m_t$, where $\sharp$ stands for the pushforward operator. Then, applying the Itô-Wentzell formula, we get that $\widetilde m_t$ satisfies
\begin{equation}\label{uakzyhejnsdf}
\left\{ \begin{array}{ll}
\partial_t \widetilde m_t = \nabla\cdot[\frac{\sigma^2}{2} \nabla\widetilde m_t +\widetilde m_t (x+B^0_t)],  \\
\widetilde m_0 = \widebar m_0.
 \end{array}\right.
\end{equation}
Moreover, we get that $m_t:= \mathcal N(-(X^0_t+\sigma_0B^0_t),\sigma^2 \mathrm{Id})$ satisfies, 
\begin{equation}
\mathrm{d}_t m_t(x)= - \sigma^{-2}(x+X^0_t+\sigma^0B^0_t) m_t(x)\:\mathrm{d}(X^0_t+\sigma_0B^0_t) =   \sigma^{-2}(x+X^0_t+\sigma_0B^0_t) \cdot X^0_t m_t(x)\: \mathrm{d}t.
\end{equation}
Then, as 
{
\begin{align*}
	\frac{1}{2} \sigma_0^2 \Delta m_t+\operatorname{div}\left(m_t\left(x+B_t^0\right)\right)=-\sigma^{-2} X_t^0 \nabla m_t=\sigma^{-2} X_t^0 \cdot\left(x+X_t^0+\sigma_0 B_t^0\right) m_t(x),
\end{align*}
}
it shows that $(m_t)_{t\geq 0}$ satisfies \eqref{uakzyhejnsdf} with $m_0= \widebar m_0$ and then $( (\mathrm{Id}+\sigma^0B^0_t)\sharp m_t)_t$ is solution of \eqref{uakzyhejnsdf0} with the same initial condition. Finally, $$\widebar m_t(\mathrm{d}x)=  (\mathrm{Id}+\sigma^0B^0_t)\sharp m_t(\mathrm{d}x) = c\exp\{-|x+X^0_t|^2/2\sigma^2\}\mathrm{d}x. $$ 
As $(X^0_t)_t$ is stationary in $\mathbb{R}^d$, $\widebar m_t$ is also in $\mathcal{P}_2(\mathbb{R}^d)$, which shows the first part of the Proposition. The uniqueness of this invariant measure is then given by Corollary \ref{C1}. 
\end{proof}

\section{Non-convex potential without idiosyncratic noise}\label{sec:se4}

In this section, we consider the case of a non-convex potential $V$ in the particular setting $\sigma = 0$. It turns out that in some specific situations, we can use the presence of interaction to achieve an exponential rate of convergence to the unique invariant measure. In this section, let us consider the particular case of a quadratic interaction potential $W$. More precisely, we consider that there exists $\alpha > 0$ such that for any $x \in \R^d$, $W(x) = \alpha |x|^2/2$. \\

 More precisely, we consider $m$ with dynamic given by 
\begin{equation}\label{eq:SFP:sigma0}
\mathrm{d}_t  m_t = \nabla \cdot \Big(\frac{\sigma_0^2}{2} \nabla m_t + m_t (\nabla V + \alpha(\cdot - \mu_1(m_t)))\Big)\mathrm{d}t  -\sigma_0 \nabla m_t \cdot \mathrm{d}B^0_t,
\end{equation} 
where, for any $m \in \mathcal{P}_1(\R^d)$, 
\begin{equation*}
	\mu_1(m) = \int_{\R^d} x \: m(\mathrm{d}x) \in \R^d. 
\end{equation*}

For the potential $V$, we will consider the following assumptions:
{
\begin{assumption}\label{A5}
	The drift $V : \R^d \to \R$ is twice differentiable in $\R^d$. Moreover, 
	\begin{itemize}
	\item[\rm (A5.1)]$V$ is confining in the sense that there exists a function $\kappa : [0; +\infty) \to \R$ 
		 {such that} 
    \begin{equation}\nonumber  
        \left( \nabla V(x) - \nabla V(y)\right)\cdot\left(x-y\right) \geq \kappa(| x-y|) |x-y|^2.
    \end{equation}
		{with}
		\begin{equation*}
		\lim\sup_{r \to +\infty}\kappa(r) > 0 \quad 
		\textrm{\rm and} \quad 
			\int_0^1 r \kappa(r)^{-} \mathrm{d} r<\infty, 
		\end{equation*}
		where $\kappa^{-} = \max(0, -\kappa)$. 
	\item[\rm (A.5.2)] $\nabla V$ is Lipschitz continuous with constant $L_V > 0$. 
\end{itemize}
\end{assumption}
}

{
\subsubsection*{About the Assumptions}

The assumptions in this section are different from those in the earlier parts. Here, we explain these differences and the reasons behind them:

\begin{itemize}

	\item \textbf{The Assumptions on $V$}. Once again, we assume that $\nabla V$ is Lipschitz-continuous. However, this assumption is no longer merely a technical requirement to ensure strong uniqueness of the solutions of Equation \eqref{LP} but carries significant physical relevance in this context. From Equation \eqref{eq:SFP:sigma0}, it is clear that the first-order moment $(\mu_1(m_t))_{t \geq 0}$ plays a crucial role in the dynamics. The regularity assumption on $\nabla V$ enables us to demonstrate that the introduction of common noise sufficiently randomizes $(\mu_1(m_t))_{t \geq 0}$, ensuring its ergodicity---even for an arbitrarily small level of common noise $\sigma_0 > 0$. This phenomenon lies at the heart of uniqueness recovery.

    \item \textbf{The Structure of the Interaction}. In this Section, we only consider \textit{linear interactions}. This choice is important to ensure the uniqueness of the invariant measure. As previously mentioned, to achieve this, we use the fact that the common noise randomizes the mean enough (first moment) of the measures $(m_t)_{t\geq 0}$. Then, the linear structure of the interaction also has another benefit: it makes the variance of $m_t$ decrease over a long period. This decrease helps the system settle into a unique state. With a more general structure (non-quadratic interaction force), we cannot say that the common noise helps recover uniqueness even in the case $\sigma = 0$. This is due, as previously mentioned, to the difficulty that the common noise here is only finite-dimensional. 
\end{itemize}
\color{black}
\subsection{Existence of an invariant measure}

This section begins with a discussion on the existence of an invariant measure, which in this case can be described explicitly. This comes from the absence of idiosyncratic noise within our system coupled with the strong convexity of the interaction, which allows the measure-valued process driven by Equation~\eqref{eq:SFP:sigma0}, to get close to its first moment on a large time scale. Then, we see that the variance within the stochastic flow of measure decays to zero. It is then natural that the invariant measure finds its support in Dirac masses, as detailed in the following Proposition. 
\begin{proposition}\label{prop:P9}
	Under Assumptions \ref{A5}, {there exists at least one invariant measure $\widebar P$} in the sense of Definition \ref{D1}, and this measure is supported by Dirac masses. More precisely,  
{
$$
\widebar P(\mathrm{d}m)= \int_{\mathbb{R}^d} \delta_{\delta_a}(\mathrm{d}m) m_0(\mathrm{d}a)
$$
}
where $m_0$ is the probability measure on $\mathbb{R}^d$ solution of $m_0$ admits a density proportional to $\exp(-2/\sigma_0^2 V)$.
\end{proposition}

\begin{proof}[Proof of Proposition \ref{prop:P9}]
	If we define $$\widebar P(\mathrm{d}m)= \int_{\mathbb{R}^d} \delta_{\delta_a}(\mathrm{d}m) m_0(\mathrm{d}a), $$ then for all twice differentiable function in the sense of Lions derivatives $F\in\mathcal{C}^2(\mathcal{P}_2(\R^d), \R)$
\begin{equation}
\begin{aligned}[t]
I(F) &:= \int_{\mathcal{P}_2(\mathbb{R}^d)} \left[ \int_{\mathbb{R}^d} \left( D_mF(m,x)\cdot \left(- \nabla V(x) - \nabla W\ast m(x) \right)+\frac{\sigma_0^2}{2} \nabla\cdot D_mF(m,x) \right)) m(\mathrm{d}x) \right. \\
& \left. +\frac{\sigma_0^2}{2}\int_{\mathbb{R}^{2d}} \mathrm{Tr}\left[D^2_{mm}F(m,x,y)) \right]m(\mathrm{d}x)m(\mathrm{d}y)\right] \widebar P(\mathrm{d}m)\\
& = \int_{\mathbb{R}^d}  \left[  D_mF(\delta_a,a)\cdot \left( - \nabla V(a) \right) +\frac{\sigma_0^2}{2} \nabla \cdot D_mF(\delta_a,a)  
+\frac{\sigma_0^2}{2} \mathrm{Tr}\left[D^2_{mm}F(\delta_a,a,a)\right] \right]m_0(\mathrm{d}a),
\end{aligned}
\end{equation}
where, if we define $\varphi(a)= F(\delta_a)$, then 
$$
\nabla \varphi(a)= D_mF(\delta_a,a), \qquad \Delta\varphi(a)= \mathrm{Tr}\left[D^2_{mm}F(\delta_a,a,a)+ D^2_{xm} F(\delta_a,a)\right].
$$
Then,
\begin{equation}
I(F)=  \int_{\mathbb{R}^d}  \left[  - \nabla \varphi(a)\cdot \nabla V(a) +\frac{\sigma_0^2}{2} \Delta \varphi(a)  \right]m_0(\mathrm{d}a)=0, 
\end{equation}
as $m_0$ is solution of  $\frac{\sigma_0^2}{2} \Delta m_0 + \nabla\cdot\left(\nabla Vm_0\right)=0$. This concludes the proof. 
\end{proof}

{ We emphasize here that such an identification of an invariant measure only works in the particular case $\sigma = 0$. 
In fact, in this case, the dynamic of the associated process $X$ is only driven by the so-called common noise $B^0$. Then the key insight is that the interaction force drives the process toward its conditional expectation with respect to the common noise. Basically that means that in a large time range $X_t$ is equal in some sense to $\E[X_t|B^0]$. It is natural to expect that, over a large time range, the variance of the process of interest $(m_t)_{t \geq 0}$ converges to $0$, which justifies the form of $\widebar{P}$. However, when $\sigma > 0$, this is no longer true—the variance of $m_t$ does not vanish as $t \to \infty$, making it impossible to identify an invariant measure.
}

\subsection{Uniqueness of the invariant measure and exponential decay}

The previous section shows that for $\sigma=0$, we can explicitly identify an invariant measure, though its uniqueness is uncertain due to the non-convex nature of the confinement potential. This section establishes that, in the absence of idiosyncratic noise, the invariant measure is indeed unique. More precisely, let us state the following result:
\begin{theorem}\label{th:sg0}
	Whenever $\sigma = 0$ and under Assumptions \ref{A3}, for any initial conditions $P_0, Q_0 \in \mathcal{P}_2(\mathcal{P}_2(\R^d))$,  there exists a constant $C$ such that for any $t \geq 0$,
	\begin{align*}
		\mathrm{d}_1^{\mathcal{P}(\R^d)}( P_t, Q_t) \leq C\Big( e^{-\ell\sigma_0^2t} + e^{-( \alpha - 2L_V)t}\Big),
	\end{align*}
	for some positive constant $\ell$. Moreover, $\bar P$ defined in Proposition \ref{prop:P9} is the unique invariant measure in the sense of Defintion \ref{D2}. 
\end{theorem}

This is the main result of this paper, showing that when $\sigma = 0$, there is only one invariant measure, which contrasts with scenarios lacking common noise where multiple invariant solutions exist as we are in a context with non-convex confinement potential. This theorem reveals that introducing finite dimensional noise to the system uniquely determines the invariant measure, a result that is stronger than the previous one in the literature. Earlier studies (see e.g \cite{angeli2023mckean} ) prove that adding cylindrical noise could achieve invariant measure uniqueness, but our results go further by showing that even finite-dimensional noise is enough for this purpose. ~\\

{Proposition \ref{prop:P9} shows that when \( \sigma = 0 \), we are able to identify the unique invariant measure. Moreover, in Theorem \ref{th:sg0}, we observe that the strong interaction structure facilitates exponentially fast convergence to this invariant measure. This result may initially appear surprising, but the explanation lies in how the interaction term influences the convergence rates. The key insight is that this force effectively drives the process toward its conditional mean, simplifying the long-time analysis of the behavior of this conditional mean process, which itself behaves almost like a standard diffusion process. 

The main challenge is to adapt the reflection coupling machinery introduced by Eberle to account for the presence of common noise. This adaptation is necessary to handle the non-convexity and to prove that the conditional mean is an ergodic process.

}

\subsection{Discussion and Heuristics}
{
Let us first mention once again that the difficulty comes from the fact that the common noise is just of finite dimension while 
the state variable, which should be seen as the conditional marginal law of the system given the common noise, lives in 
a space of infinite dimension. In this context, our result holds true if, in addition to standard confining properties, the mean-field interaction term forces the system to be attracted by its conditional expectation. In particular, in this linear interaction setting, we can see that the dynamic of $(m_t)_{t \geq 0}$ is driven by the behavior of $(\mu_1(m_t))_{t\geq 0}$ and that
\begin{equation*}
	\mathrm{d}_t \mu_1(m_t) = -\nabla V(\mu_1(m_t))\mathrm{d}t + \varepsilon^V_t \mathrm{d}t + \sigma_0\mathrm{d}B^0_t,
\end{equation*}
where 
\begin{equation*}
	\varepsilon_t^V = \nabla V(\mu_1(m_t)) - \int_{\R^d} \nabla V(x) m_t(\mathrm{d}x). 
\end{equation*}

Further, using the Lipschitz continuity of $V$, we can show that $\varepsilon^V_\cdot$ decays exponentially fast to $0$ over time. Then, the dynamics of $(\mu_1(m_t))_{t \geq 0}$ resemble those of the diffusion process $S$, constructed on $(\Omega_0, \mathbb{F}^0, \mathbb{P}_0)$ as the solution of
\begin{equation} \label{eq:diffusion}
    \mathrm{d}S_t = -\nabla V(S_t) \, \mathrm{d}t + \sigma_0 \, \mathrm{d}B^0_t, \quad t \geq 0; \quad S_0 = \mathbb{E}_{\mathbb{P}_1}(X_0).
\end{equation}
It is worth emphasizing, even though it may seem obvious, that \eqref{eq:diffusion} is not a McKean–Vlasov equation but a standard diffusion equation. In particular, the long-time analysis of \eqref{eq:diffusion} falls within a much broader literature, as the study of the long-term behavior of diffusion processes has been a significant topic of interest; see, for instance, \cite{ane2000inegalite, bakry2008simple, bakry1985diffusions}, among others. 

To prove that this process is ergodic, we will use coupling-by-reflection techniques similar to those developed by Eberle \cite{eberle2016reflection}. Specifically, we aim to construct a coupling $(X, Y)$, adapted to the presence of common noise, such that the conditional expectations with respect to the common noise converge over time. This is done properly in the following, mainly in the proof of the restoration of uniqueness stated in Theorem \ref{th:sg0}. }

\begin{remark}
The specific form of the invariant measure, as detailed in Proposition \ref{prop:P9}, showcases a scenario unique to quadratic interactions. However, the uniqueness result of the section would hold true even when we tweak the interaction term to something like 
\begin{equation*}
(x,\mu) \mapsto -\nabla W\left( x - \int y \: \mu(\mathrm{d}y)\right),
\end{equation*}
 provided \(W\) is a uniformly convex potential. The critical aspect we are looking at here is not the specific form of the interaction but how it allows us to identify the process with its mean on a large time scale. Then, the uniqueness of the invariant measure holds for more general interaction potential.
\end{remark}

\subsection{Proof of Theorem \ref{th:sg0}} 
To make the proof easier to read, we highlight its outline here. \\

\noindent \textit{Proof Outline.}
\begin{itemize}
    \item \textit{Step 1.} We construct a coupling \((X, Y)\) inspired by the reflection coupling introduced in \cite{lindvall1986coupling}, adapted to the presence of a common noise. 
    \item \textit{Step 2.} We prove that the long-time behavior of the process \((m_t)_{t\geq 0}\) can be understood by studying the long-time behavior of its first moment \((\mu_1(m_t))_{t \geq 0}\).
    \item \textit{Step 3.} We prove, leveraging the coupling to handle non-convexity, that the presence of common noise allows us to establish ergodicity for the first moment \(\mu_1(m_t)\).
    \item \textit{Step 4.} We combine the results of Step 2 and Step 3 to conclude the proof by proving that the recovered ergodicity for the process \((\mu_1(m_t))_{t \geq 0}\) propagates to the whole measure-valued process \((m_t)_{t\geq 0}\).
\end{itemize}

\begin{proof}
	\textit{Step 1:} \textbf{The coupling argument}.\\
	
	 For a given $\delta > 0$, we define $X^{\delta}$ and $Y^\delta$ with common dynamic given by:
\begin{align*}
	\mathrm{d}X^{\delta}_t = -\nabla V(X^{\delta}_t)\mathrm{d}t - \alpha(X^{\delta}_t - \mathbb{E}_{\mathbb{P}^1}[X^\delta_t])  \mathrm{d}t + \sigma_0\{ \pi_{\delta}( E^\delta_t)\mathrm{d}B^0_t + \lambda_\delta(E^\delta_t)\mathrm{d}\tilde B^0_t\}
\end{align*}
and 
\begin{align*}
	\mathrm{d}Y^{\delta}_t = -\nabla V(Y^{\delta}_t)\mathrm{d}t - \alpha(Y^{\delta}_t -\E_{\mathbb{P}^1} Y^{\delta}_t)  \mathrm{d}t+ \sigma_0\{ (\mathrm{Id} - 2e^\delta_t{e^\delta_t}^T)\pi_{\delta}( E^\delta_t)\mathrm{d}B^0_t + \lambda_\delta(E^\delta_t)\mathrm{d}\tilde B^0_t\}. 
\end{align*}
where 
\begin{itemize}
	\item $B^0$ and $\tilde B^0$ are independent Brownian motions adapted to $\mathbb{F}^0$;
	\item For all $t \geq 0$ and $\delta > 0$, we denote 
	\begin{equation*}
		E^\delta_t = \mathbb{E}_{\mathbb{P}_1}[X^\delta_t - Y^\delta_t];
	\end{equation*}
	\item For all $t \geq 0$ and $\delta > 0$, 
	\begin{equation*}
		e^\delta_t = \begin{cases}   E^\delta_t/|E^\delta_t|   \qquad &\text{if } |E^\delta_t| \neq 0, \\
		0 & \text{otherwise}. 
		\end{cases}
	\end{equation*}
	\item We define a non-decreasing and continuous function $\pi$, such that for $x \in \R^N$, 
\begin{equation}\nonumber
	\pi(x) =  \begin{cases}1 & \text { if }|x|\geq 1 \\ 0 & \text { if } |x| \leq 1/2,
	\end{cases}
\end{equation}
and, consider a non negative function $\lambda$ such that $$\pi(x)^2 + \lambda(x)^2 = 1, \quad \forall x \in \R^N.$$
Moreover, we extend $\pi$ on the whole space, with the constraint that this remains a non-decreasing and Lipschitz continuous function. Finally, we define $\pi_\delta: x \mapsto \pi\left( x/\delta\right)$ and $\lambda_\delta: x \mapsto \lambda\left( x/\delta\right)$. 
\end{itemize}

The first thing to notice is that under the Lipschitz continuity assumptions on $\nabla V$, and the integrability properties of the initial conditions, we have the existence and uniqueness of {a} weak solution. Moreover, thanks to the choice of $\pi_\delta$ and $\lambda_\delta$, the pair $(X^\delta, Y^\delta)$ is a coupling of $(Z, \widetilde Z)$ where for $Z$ and $\widetilde Z$ admits the same dynamic,
\begin{equation}
	\mathrm{d}Z_t = -\nabla V(Z_t) \mathrm{d}t - \alpha(Z_t - \E_{\mathbb{P}_1}[Z_t])\mathrm{d}t + \sigma_0\mathrm{d}B^0_t,
\end{equation}
with initial condition $Z_0 = X_0$ and $\widetilde Z_0 = Y_0$. 
This is mainly due to Levy's characterization of the Brownian motion which is applicable here as for all $x \in \R^d$, $\lambda^2_\delta(x) + \pi_\delta^2(x) = 1$. 

Moreover, that is worth mentioning that there exist two $d$-dimensional Brownian motions $(\beta^0_t)$ and $(\tilde\beta^0_t)$ adapted to the filtration $\mathbb{F}^0$ such that if we define for all $t \geq 0$,  $m^{X, \delta}_t = \mathcal{L}^1(X^\delta_t)$ and $m^{Y, \delta}_t = \mathcal{L}^1(Y^\delta_t)$, we have
	\begin{equation*}
		\mathrm{d}_t  m^{X, \delta}_t = \nabla \cdot \Big(\frac{\sigma_0^2}{2} \nabla m^{X, \delta}_t + m^{X, \delta}_t \big(\nabla V + \alpha\big(m^{X, \delta}_t - \int_{\R^d} x m^{X, \delta}_t(\mathrm{d}x)\big)\big)\Big)\mathrm{d}t  -\sigma_0 \nabla m_t \cdot \mathrm{d}\beta^0_t,
	\end{equation*}
and 
	\begin{equation*}
		\mathrm{d}_t  m^{Y, \delta}_t = \nabla \cdot \Big(\frac{\sigma_0^2}{2} \nabla m^{Y, \delta}_t + m^{Y, \delta}_t \big(\nabla V + \alpha\big(m^{Y, \delta}_t - \int_{\R^d} x m^{Y, \delta}_t(\mathrm{d}x)\big)\big)\Big)\mathrm{d}t  -\sigma_0 \nabla m_t \cdot \mathrm{d}\tilde\beta^0_t.
\end{equation*}

The proof of this result is quite straightforward; see \cite[Vol. II, Thm 4.14]{carmona2018probabilistic}. Even if this may appear very clear, it is important to mention it here. The fact that this approach works is primarily due to the reflection being applied with respect to something measurable with respect to the \textit{common noise}.

~\\

\textit{Step 2:} \textbf{A variance estimate}\\

 Let us now focus on the study of 
\begin{equation*}
	\E_{\mathbb{P}_1}\left[\left| X^\delta_t - \E_{\mathbb{P}_1}[X^\delta_t]\right|^2\right] \text{ and } \E_{\mathbb{P}_1}\left[\left| Y^\delta_t - \E_{\mathbb{P}_1}[Y^\delta_t]\right|^2\right]. 
\end{equation*}
First, for any $t > 0$, we can write: 
\begin{align*}
	\frac{\mathrm{d}}{\mathrm{d}t}\left| X^\delta_t - \E_{\mathbb{P}_1}[X^\delta_t]\right|^2 = & -2\left( X^\delta_t - \E_{\mathbb{P}_1}[X^\delta_t]\right)\cdot\left(\nabla V(X^\delta_t) - \E_{\mathbb{P}_1}[\nabla V(X^\delta_t)]\right) \\ 
	& - 2\alpha \left| X^\delta_t - \E_{\mathbb{P}_1}[X^\delta_t] \right|^2.
\end{align*}
Then, using the fact that $\nabla V$ is Lipschitz continuous, we get
\begin{equation*}
	\frac{\mathrm{d}}{\mathrm{d}t}\E\left[|X_t - \E_{\mathbb{P}^1}[X_t]|^2\right] \leq -2(\alpha - 2L_V)\E\left[|X_t - \E_{\mathbb{P}^1}[X_t]|^2\right].
\end{equation*}
Applying Gronwall Lemma, we obtain 
\begin{equation}\label{eq:proof:identification}
	\E_{\mathbb{P}_1}\left[|X^\delta_t - \E_{\mathbb{P}^1}[X^\delta_t]|^2\right] \leq \E\left[|X_0 - \E_{\mathbb{P}^1}[X_0]|^2\right]\exp\left( -2(\alpha - 2L_V)t\right).
\end{equation}
The same proof stands for $Y^\delta$. 

~\\
~\\
~\\
~\\

\textit{Step 3:} \textbf{Ergodicity for the moment of order one}. \\

The previous step shows that over large time scales, the conditional law of the process $X$ can be identified with its first moment. Consequently, to understand the long-term behavior of the solution $(m_t)_{t\geq 0}$ to Equation \eqref{eq:SFP:sigma0}, it is enough to study the long-term behavior of 
\begin{equation*}
	 \left(\int_{\R^d} x \: m_t(\mathrm{d}x)\right)_{t \geq 0}. 
\end{equation*}
In particular, we would like to show that it admits a unique equilibrium. We are then left with the study of the following dynamic:
\begin{equation*}
	\mathrm{d}\mathbb{E}_{\mathbb{P}^1}[X^\delta_t] = -\mathbb{E}_{\mathbb{P}^1}[\nabla V(X^\delta_t)]\mathrm{d}t + \sigma_0\mathrm{d}B^0_t. 
\end{equation*}
In order to prove that the latter is ergodic as soon as $\sigma_0 > 0$, let us proceed as in \cite{durmus2020elementary}, and introduce the following quantity:
\begin{equation}\nonumber
\begin{aligned}[t]
	& R_0 = \inf\left\{ s \geq 0, \kappa(r) \geq 0, \: \forall r \geq s\right\},\\
	& R_1 = \inf\left\{ s \geq R_0, s(s-R_0)\kappa(r) \geq 4 \sigma_0^2, \: \forall r \geq s\right\}. 
\end{aligned}
\end{equation}
Moreover, we consider $\varphi$, $\Phi$, $g : [0,+\infty) \to [0,+\infty)$ defined by
\begin{equation}\nonumber
\begin{aligned}[t]
	& \varphi(r) = \exp\left(-\frac{1}{2\sigma_0^2}\int_0^r s \kappa_-(s) \mathrm{d}s \right), \\
    & \Phi(r) = \int_0^r \varphi(s) \mathrm{d}s,\\
	& g(r) = 1 - \frac{\ell}{2}\int_0^{r\wedge R_1} \Phi(s)/\varphi(s) \mathrm{d}s,
\end{aligned}
\end{equation}
where $\kappa_- = \max(0, -\kappa)$ and $\ell = \left( \int_0^{R_1} \Phi(s) \varphi(s)^{-1} \mathrm{d}s\right)^{-1}$.  We now define an increasing function $f : [0, +\infty) \to [0,+\infty)$ by:
\begin{equation}\nonumber
	f(r) = \int_0^r \varphi(s)g(s) \mathrm{d}s.
\end{equation}
The function $f$ that has been constructed is positive, non-decreasing, and concave. Moreover, it satisfies
   \begin{equation}\label{ineq:f}
        \varphi(R_0)r/2 \leq f(r) \leq r.
   \end{equation}
   This ensures that $(x,y) \mapsto f(|x-y|)$ defines a distance that is equivalent to the Euclidean one. Below, we will use contraction properties in Wasserstein-1 distance based on the underlying distance $f(|x-y|)$. This contraction property is a consequence of the following inequalities which hold for all $r > 0$:
\begin{equation}\label{contraction}
	f''(r) - \frac{1}{2\sigma_0^2} r\kappa(r) f'(r) \leq -\ell f(r)/2. 
\end{equation}
For a proof of this inequality, see \cite[Sec. 4]{eberle2016reflection}. \\

Now, using Itô's formula, we get that for any $\delta > 0$,
	\begin{align*}
		\mathrm{d}|\mathbb{E}_{\mathbb{P}^1}[X^{\delta}_t] - \mathbb{E}_{\mathbb{P}^1}[Y^{\delta}_t]|^2 = & -2(\mathbb{E}_{\mathbb{P}^1}[X^{\delta}_t] - \mathbb{E}_{\mathbb{P}^1}[Y^{\delta}_t])\cdot(\mathbb{E}_{\mathbb{P}^1}[\nabla V(X^{\delta}_t)] - \mathbb{E}_{\mathbb{P}^1}[\nabla V(Y^{\delta}_t)])\mathrm{d}t \\
		& + 4\sigma_0(\mathbb{E}_{\mathbb{P}^1}[X^{\delta}_t] - \mathbb{E}_{\mathbb{P}^1}[Y^{\delta}_t])\pi_\delta(E^\delta_t)e^\delta_t(e^\delta_t)^T\mathrm{d}B^0_t \\
		& + 4\sigma_0^2\pi_\delta(E^\delta_t)^2(e^\delta_t)^T\mathrm{d}t. 
	\end{align*}
For any $\varepsilon > 0$, let us introduce $\psi_\varepsilon : [0,+\infty] \in r \mapsto (r+\varepsilon)^{1/2}$. This function is continuously twice differentiable, then we can write
\begin{align*}
	\mathrm{d}\psi_{\varepsilon}(|\mathbb{E}_{\mathbb{P}^1}[X^{\delta}_t] - \mathbb{E}_{\mathbb{P}^1}[Y^{\delta}_t]|^2) = & -2 \psi^\prime_{\varepsilon}(|E^\delta_t|^2) E^\delta_t\cdot(\mathbb{E}_{\mathbb{P}^1}[\nabla V(X^{\delta}_t)] - \mathbb{E}_{\mathbb{P}^1}[\nabla V(Y^{\delta}_t)])\mathrm{d}t \\
	& + 4\sigma_0 \psi^\prime_{\varepsilon}(|E^\delta_t|^2) E^\delta_t \pi_\delta(E^\delta_t)e^\delta_t(e^\delta_t)^T\mathrm{d}B^0_t \\
	& + 4 \sigma_0^2\psi^\prime_{\varepsilon}(|E^\delta_t|^2)\pi_\delta(E^\delta_t)^2(e_t^\delta)^T\mathrm{d}t \\
	& + 8\sigma_0^2\psi^{\prime\prime}_{\varepsilon}(|E^\delta_t|^2)|E^\delta_t|^2\psi_\delta(E^\delta_t)(e_t^\delta)^T\mathrm{d}t. 
\end{align*}
We now want to take the limit $\varepsilon\to 0$. Using dominated convergence theorem and stochastic dominated convergence theorem as stated in \cite[Chap. IV, Sec. 3]{revuz2013continuous}, combined to the fact that $4r\psi^\prime_{\varepsilon}(r^2) \leq 1$, we can deal with the first two lines and get that for all $t\geq 0$
\begin{equation}
\begin{aligned}\label{eq:TCD1}
	\lim_{\varepsilon \to 0}\int_0^t&  2 \psi^\prime_{\varepsilon}(|E^\delta_s|^2) E^\delta_s\cdot(\mathbb{E}_{\mathbb{P}^1}[\nabla V(X^{\delta}_s)] - \mathbb{E}_{\mathbb{P}^1}[\nabla V(Y^{\delta}_s)])\mathrm{d}s \\
	& = \int_0^t e^\delta_s\cdot(\mathbb{E}_{\mathbb{P}^1}[\nabla V(X^{\delta}_s)] - \mathbb{E}_{\mathbb{P}^1}[\nabla V(Y^{\delta}_s)])\mathrm{d}s   
\end{aligned}
\end{equation}
and 
\begin{equation}
\begin{aligned}\label{eq:TCD2}
	\lim_{\varepsilon \to 0} \int_0^t  4\sigma_0 \psi^\prime_{\varepsilon}(|E^\delta_s|^2) E^\delta_s \pi_\delta(E^\delta_s)e^\delta_s(e^\delta_s)^T\mathrm{d}B^0_s 
	 =  \int_0^t 4\sigma_0 \pi_\delta(E^\delta_s)(e^\delta_s)^T\mathrm{d}B^0_s
\end{aligned}
\end{equation}
For the two last, we need to take advantage of the presence of the function $\pi_\delta$ for $\delta > 0$. We have  
\begin{align*}
	& \left|4 \sigma_0^2\psi^\prime_{\varepsilon}(|E^\delta_t|^2)\pi_\delta(E^\delta_t)^2(e_t^\delta)^T+ 8\sigma_0^2\psi^{\prime\prime}_{\varepsilon}(|E^\delta_t|^2)|E^\delta_t|^2\psi_\delta(E^\delta_t)(e_t^\delta)^T\right| \\
	& \leq \left|\pi_\delta(E^\delta_t)^2\sigma_0^2\left( 4\psi^\prime_{\varepsilon}(|E^\delta_t|^2) + 8\psi^{\prime\prime}_{\varepsilon}(|E^\delta_t|^2)|E^\delta_t|^2\right)\right|. 
\end{align*}
Moreover, we know that $\psi^\prime_{\varepsilon}(r^2) + 2\psi^{\prime\prime}_{\varepsilon}(r^2)r^2 \leq r^{-3}$, for all $r > 0$ and $\varepsilon \leq 1$. Using the presence of $\pi_\delta$, we have that the integrand is null near 0. Then, we can once again apply the dominated convergence theorem and obtain
\begin{equation}\label{eq:TCD3}
	\lim_{\varepsilon \to 0} \int_0^t 4 \left\{\sigma_0^2\psi^\prime_{\varepsilon}(|E^\delta_s|^2)\pi_\delta(E^\delta_s)^2(e_s^\delta)^T+ 8\sigma_0^2\psi^{\prime\prime}_{\varepsilon}(|E^\delta_s|^2)|E^\delta_s|^2\psi_\delta(E^\delta_s)(e_s^\delta)^T \right\}\mathrm{d}s = 0. 
\end{equation}
Finally, combining Equations \eqref{eq:TCD1}, \eqref{eq:TCD2} and \eqref{eq:TCD3}, we get 
\begin{align}\label{eq:LT}
	\mathrm{d}| \mathbb{E}_{\mathbb{P}^1}[X^{\delta}_t] - \mathbb{E}_{\mathbb{P}^1}[Y^{\delta}_t]| & = - e^{\delta}_t\cdot\Big(\mathbb{E}_{\mathbb{P}^1}[\nabla V(X^{\delta}_t)] - \mathbb{E}_{\mathbb{P}^1}[\nabla V(Y^{\delta}_t)]\Big) \mathrm{d}t \\
	& \nonumber + 2\sigma_0\pi_{\delta}(E^{\delta}_t)( e^{\delta}_t)^T \mathrm{d}B^0_t. 
\end{align}

Using Equation \eqref{eq:LT}, we can write:
\begin{align*}
	\mathrm{d}f(|E^\delta_t|) & = -f^\prime( | E^{\delta}_t|) e^{\delta}_t\cdot\Big( \mathbb{E}_{\mathbb{P}^1}[\nabla V(X^{\delta}_t)] - \mathbb{E}_{\mathbb{P}^1}[\nabla V(Y^{\delta}_t)]\Big) \mathrm{d}t  \\
	& + 2\sigma_0f^\prime( |E^{\delta}_t|)\pi_{\delta}( E^{\delta}_t)(e^{\delta}_t)^t \mathrm{d}B^0_t \\
	& + 2\sigma_0^2 f^{\prime\prime}( | E^{\delta}_t|)\pi_{\delta}(E^{\delta}_t)^2 \mathrm{d}t.
\end{align*}
To control the first term, we carry out the following decomposition
\begin{align*}
	-f^\prime( | E^{\delta}_t|) e^{\delta}_t\cdot &\big( \mathbb{E}_{\mathbb{P}^1}[\nabla V(X^{\delta}_t)] - \mathbb{E}_{\mathbb{P}^1}[\nabla V(Y^{\delta}_t)]\big)  \\
	= & -f^\prime( | E^{\delta}_t|) e^{\delta}_t\cdot\big( \mathbb{E}_{\mathbb{P}^1}[\nabla V(X^{\delta}_t)] - \nabla V(\mathbb{E}_{\mathbb{P}^1}[X^{\delta}_t])\big)  \\
	& - f^\prime( | E^{\delta}_t|) e^{\delta}_t\cdot\big( \nabla V(\mathbb{E}_{\mathbb{P}^1}[X^{\delta}_t]) - \nabla V(\mathbb{E}_{\mathbb{P}^1}[Y^{\delta}_t])\big)  \\
	& -f^\prime( | E^{\delta}_t|) e^{\delta}_t\cdot\big( \mathbb{E}_{\mathbb{P}^1}[\nabla V(Y^{\delta}_t)] - \nabla V(\mathbb{E}_{\mathbb{P}^1}[Y^{\delta}_t])\big).
\end{align*}
Taking the expectation and leveraging on the fact that $f'$ is globally bounded, the existence of $C_1 > 0$, independent of $t$ and $\delta$, such that
\begin{equation*}
\left\{ \begin{array}{ll}
 -f^\prime( | E^{\delta}_t|) e^{\delta}_t\cdot\big( \mathbb{E}_{\mathbb{P}^1}[\nabla V(X^{\delta}_t)] - \nabla V(\mathbb{E}_{\mathbb{P}^1}[X^{\delta}_t])\big) \leq C_1\E\left[ |X^{\delta}_t - \mathbb{E}_{\mathbb{P}^1}[X^{\delta}_t]|\right]\\
 ~\\
	-f^\prime( | E^{\delta}_t|) e^{\delta}_t\cdot\big( \mathbb{E}_{\mathbb{P}^1}[\nabla V(Y^{\delta}_t)] - \nabla V(\mathbb{E}_{\mathbb{P}^1}[Y^{\delta}_t])\big) \leq C_1\E\left[ |Y^{\delta}_t - \mathbb{E}_{\mathbb{P}^1}[Y^{\delta}_t]|\right]
	\end{array}
	\right.
\end{equation*}
Moreover, using Step 1, we obtain
\begin{align*}
	& -\mathbb{E}\big[f^\prime( | E^{\delta}_t|) e^{\delta}_t\cdot\big( \mathbb{E}_{\mathbb{P}^1}[\nabla V(X^{\delta}_t)] - \mathbb{E}_{\mathbb{P}^1}[\nabla V(Y^{\delta}_t)]\big)\big]  \\
	& \leq \: C_2e^{-( \alpha-2L_V)t}  - \E\big[ f^\prime( | E^{\delta}_t|) e^{\delta}_t\cdot (\nabla V( \mathbb{E}_{\mathbb{P}^1}[X_t^\delta]) - \nabla V(\mathbb{E}_{\mathbb{P}^1}[Y_t^\delta])) \big].
\end{align*}
Now, using the contraction property of $f$ stated in Equation \eqref{contraction}, we obtain
\begin{equation}\label{eq:contract}
	\frac{\mathrm{d}}{\mathrm{d}t}\E[ f( |E^\delta_t|)] \leq C_1e^{-(\alpha - 2L_V) t} -\ell\sigma_0^2f( |E^{\delta}_t|) + \ell\sigma_0^2\delta +|\kappa_-|_{\infty}\delta.
\end{equation}
Using Gronwall Lemma in Equation \eqref{eq:contract}, we get
 \begin{equation}\label{eq:ergo}
 	\E[ f( |\E_{\mathbb{P}^1}[X_t] - \mathbb{E}_{\mathbb{P}^1}[Y_t]|)] \leq C_3(e^{-(\alpha - 2L_V) t} + e^{-\ell\sigma_0^2t} + h(\delta)), 
 \end{equation}
for some $C_3$ that only depends on $P_0$ and $Q_0$, and where $h(\delta) =  \ell\sigma_0^2\delta +|\kappa_-|_{\infty}\delta \to 0$, whenever $\delta \to 0$.  

~\\

\textit{Step 4:} \textbf{Uniqueness recovery}\\

The proof of Theorem \ref{th:sg0} is now quite straightforward. Taking advantage of the previous sections, we can perform the following decomposition:
	\begin{align*}
		\mathrm{d}_1^{\mathcal{P}(\R^d)}( P_t, Q_t) & \leq \E_{\mathbb{P}_0}[\mathrm{d}_1^{\R^d}(m^X_t, m^Y_t)] \\
		& \leq \E[|X^\delta_t - Y^\delta_t|] \\
		& \leq \E[|X^\delta_t - \E_{\mathbb{P}_1}[X^\delta_t]|] + \E[|Y^\delta_t - \E_{\mathbb{P}_1}[Y^\delta_t]|] + |\E[|\E_{\mathbb{P}_1}[X^\delta_t] - \E_{\mathbb{P}_1}[Y^\delta_t]|]|
	\end{align*}
Using the result of Step 1, we obtain
\begin{align*}
	 & \E[|X^\delta_t - \E_{\mathbb{P}_1}[X^\delta_t]|] + \E[|Y^\delta_t - \E_{\mathbb{P}_1}[Y^\delta_t]|]  \\
	 & \leq \left(\E\left[|X_0 - \E_{\mathbb{P}^1}[X_0]|^2\right]^{1/2} + \E\left[|Y_0 - \E_{\mathbb{P}^1}[Y_0]|^2\right]^{1/2}\right)\exp\left( -(\alpha - 2C_V)t\right). 
\end{align*}
Moreover, thanks to Equation \eqref{eq:ergo}
\begin{align*}
	|\E[|\E_{\mathbb{P}_1}[X^\delta_t] - \E_{\mathbb{P}_1}[Y^\delta_t]|]| \leq C_3(e^{-(\alpha - 2L_V) t} + e^{-\ell\sigma_0^2t} + h(\delta))
\end{align*}
Finally, we obtain the existence of $C > 0$ depending only on $P_0$ and $Q_0$ such that,   
\begin{equation*}
	\mathrm{d}_1^{\mathcal{P}(\R^d)}( P_t, Q_t) \leq C\left( e^{-(\alpha - 2L_V) t} + e^{-\ell\sigma_0^2t} + h(\delta)\right).
\end{equation*}
Taking $\delta \to 0$ gives the expected result.
 \end{proof}

 \subsection*{Acknowledgments} The author would like to thank Pierre Cardaliaguet for suggesting the problem and fruitful discussions during the preparation of this work.
	

	
	\bibliographystyle{plain} 
	\bibliography{LongTimeMKVCN}  

		\appendix
	\section{}\label{Appendix}


	\subsection{Proof of Proposition \ref{Lacker}}\label{App:A0}
Equation \eqref{eq:equationP} comes from Section 1.5 in \cite{lacker2020superposition} (Equation (1.15)). Hence, if $(m_t)_{t\geq 0}$ is an invariant measure in the sense of Definition \ref{D2}, then $\bar P$ straightforwardly is a solution of Equation \eqref{eq:InvMeasure}. Conversely, let us consider a probability measure $\bar P$, solution of Equation \eqref{eq:InvMeasure}. More precisely, for all $F \in \mathcal{C}_b^2(\mathcal{P}(\R^d))$
\begin{align*}
	\left\langle \widebar P, \mathcal{M} F\right\rangle = 0.
\end{align*}
Let us consider $T > 0$, then
\begin{align*}
	&\int_{\mathcal{P}(\R^d)}\| \nabla V + \nabla W\ast m\|_{L^2(m)}^2 \bar P(\mathrm{d}m) \\
	& \leq C\Big(1 + \int_{\mathcal{P}(\R^d)}\int_{\R^d} |\nabla V(x)|^2 m(\mathrm{d}x)\bar P(\mathrm{d}m) + \int_{\mathcal{P}(\R^d)}\int_{\R^d} |\nabla W\ast m(x)|^2 m(\mathrm{d}x)\bar P(\mathrm{d}m)\Big) \\
	& \leq C\Big(1 + \int_{\mathcal{P}(\R^d)}\int_{\R^d} |x|^2 m(\mathrm{d}x)\bar P(\mathrm{d}m) + \int_{\mathcal{P}(\R^d)} |\nabla W\ast m(0)|^2\bar P(\mathrm{d}m)\Big)\\
	& \leq C\Big(1 + \int_{\mathcal{P}(\R^d)}\int_{\R^d} |x|^2 m(\mathrm{d}x)\bar P(\mathrm{d}m)\Big),
\end{align*}
for some constant $C$ that may change from line to line and depends only on the Lipschitz constant of $\nabla V$ and $\nabla W$. Then using Proposition \ref{T1}, we get that 
\begin{align*}
	\int_{\mathcal{P}(\R^d)}\| \nabla V + \nabla W\ast m\|_{L^2(m)}^2 \bar P(\mathrm{d}m) < +\infty.
\end{align*}
Applying Theorem 1.5 in \cite{lacker2020superposition}, we get the existence of a process $(\mu_t) \in \mathcal{C}([0,T], \mathcal{P}(\R^d))$ such that $\mu$ has dynamic given by \eqref{eq:main} and for all $t \in [0,T]$, $\mathcal{L}(\mu_t) = \bar P$. Then by weak uniqueness of the solutions of \eqref{eq:main}, we get that $\bar P$ is an invariant measure for $m$ in the sense of Definition \ref{D2}.

	\subsection{Proof of Lemma \ref{le:L2}}\label{App:A1}
We consider the process $(X_t)$, driven by the dynamic \eqref{LP} and with initial condition $X_0$ such that $\mathcal{L}(X_0) = P_0$, in the sense of Definition \ref{D1}. 	
Let us denote in the following $m_2(t) = \mathbb{E}\left[ |X_t|^2\right]$. Then expanding using the Ito formula and taking the time derivative, gives:
    \begin{equation}\nonumber
    \begin{aligned}[t]
        m_2'(t) & = -2\mathbb{E}\left[ X_t \cdot \left( \nabla V(X_t) + \nabla W \ast m_t(X_t)\right) \right] + (\sigma^2 +\sigma_0^2)d\\
        & = -2\mathbb{E}\left[ X_t \cdot \left( \nabla V(X_t) - \nabla V(0)\right)\right] \\
        & \vspace{4cm} - 2\nabla V(0)\mathbb{E}\left[ X_t\right] - 2\mathbb{E}\left[ X_t \cdot \nabla W\ast m_t(X_t) \right] + (\sigma_0^2 + \sigma^2)d.
    \end{aligned}
    \end{equation}
Moreover, if $\widetilde X_t$ an independent copy of $X_t$, then:
\begin{equation}\nonumber
\begin{aligned}[t]
    \nabla W \ast m_t(X_t) &= \int_{\R^d} \nabla W(X_t - y) m_t(\mathrm{d}y) \\
    & =  \mathbb{E}[ \nabla W( X_t - \widetilde X_t) | X_t, \mathcal{F}^0_0].
\end{aligned}
\end{equation}
Now, the fact that $W$ is symetric gives $2\mathbb{E}[X_t\nabla W\ast m_t(X_t)] = \mathbb{E}[ ( X_t - \widetilde X_t)\cdot\nabla W(X_t - \widetilde X_t)]$. This decomposition is the key, the end of the proof is straightforward using Assumption made on the potential $W$ (see Assumptions \ref{A2}).

	\subsection{A measurable selection result}\label{appen:SM}

In this part, we will mainly prove Lemma \ref{MS}:
\begin{lemma}
    Let $m$ and $\rho$ be two probability measure valued random variables which are $\mathcal{F}^0_0$-measurable. Assuming that both random measures $m$ and $\rho$ admit a moment of order two almost surely, then there exists a random variable $\xi$, such that almost surely:
    \begin{equation}\nonumber
        \xi \in \argmin_{\pi \in \Pi\left( m, \rho\right)} \int_{\R^d}|x-y|^2 \pi(\mathrm{d}x, \mathrm{d}y).
    \end{equation}
\end{lemma}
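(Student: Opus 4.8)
The plan is to recognise this as a measurable selection problem: to each $\omega_0 \in \Omega^0$ one associates the set of optimal quadratic transport plans between $m(\omega_0)$ and $\rho(\omega_0)$, and one then extracts a measurable selection by a theorem of Kuratowski--Ryll-Nardzewski type, following \cite{stroock1997multidimensional}. First I would reduce to a statement depending only on $\omega_0$: since $m$ and $\rho$ are $\mathcal{F}^0_0$-measurable, it suffices to build an $\mathcal{F}^0_0$-measurable map $\xi : \Omega^0 \to \mathcal{P}(\R^{2d})$, and viewing it as a random variable on $(\Omega, \mathcal{F}, \p)$ via $(\omega_0,\omega_1)\mapsto \xi(\omega_0)$ is then immediate. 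I would work in the Polish space $E = \mathcal{P}(\R^{2d})$ with the topology of weak convergence, and set
$$
c(\pi) = \int_{\R^{2d}} |x-y|^2\, \pi(\mathrm{d}x,\mathrm{d}y), \qquad \pi \in E,
$$
which is lower semicontinuous, being the supremum over $k$ of the weakly continuous functionals $\pi \mapsto \int (|x-y|^2 \wedge k)\,\mathrm{d}\pi$; in particular $c$ is Borel on $E$.

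The next step is to check that the fibres are well behaved. Because $m$ and $\rho$ carry a second moment almost surely, for $\p^0$-a.e.\ $\omega_0$ the coupling set $\Pi(m(\omega_0),\rho(\omega_0))$ is a nonempty, weakly compact subset of $E$ (tightness of the two marginals forces tightness of the couplings), and the lower semicontinuous $c$ attains its infimum on it. Hence
$$
F(\omega_0) := \argmin_{\pi \in \Pi(m(\omega_0),\rho(\omega_0))} c(\pi)
$$
is a nonempty compact subset of $E$ for $\p^0$-a.e.\ $\omega_0$.

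The heart of the argument is the measurability of the multifunction $F$. The marginal map $\Theta : E \to \mathcal{P}(\R^d)\times \mathcal{P}(\R^d)$, sending a plan to its pair of marginals, is continuous, so the constraint set $\mathcal{G} := \{(\omega_0,\pi) : \Theta(\pi) = (m(\omega_0),\rho(\omega_0))\}$ belongs to $\mathcal{F}^0_0 \otimes \mathcal{B}(E)$, using that $\omega_0 \mapsto (m(\omega_0),\rho(\omega_0))$ is measurable. The value function
$$
v(\omega_0) := \inf_{\pi \in \Pi(m(\omega_0),\rho(\omega_0))} c(\pi) = \mathrm{d}_2^{\R^d}(m(\omega_0),\rho(\omega_0))^2
$$
is measurable in $\omega_0$, since $(\mu,\nu)\mapsto \mathrm{d}_2^{\R^d}(\mu,\nu)^2$ is lower semicontinuous on $\mathcal{P}(\R^d)^2$ and $m,\rho$ are measurable. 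As $c(\pi)\ge v(\omega_0)$ on $\mathcal{G}$, the graph of $F$ is
$$
\mathrm{Gr}(F) = \mathcal{G} \cap \{(\omega_0,\pi) : c(\pi) \le v(\omega_0)\} \in \mathcal{F}^0_0 \otimes \mathcal{B}(E),
$$
which together with the compact-valuedness exhibits $F$ as a measurable closed-valued multifunction.

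Finally, applying the Kuratowski--Ryll-Nardzewski selection theorem (equivalently, the selection result invoked in \cite{stroock1997multidimensional}) to $F$ produces an $\mathcal{F}^0_0$-measurable map $\xi : \Omega^0 \to E$ with $\xi(\omega_0) \in F(\omega_0)$ for $\p^0$-a.e.\ $\omega_0$, which is precisely the asserted random optimal plan. I expect the main obstacle to be the measurability of $F$ — concretely, verifying that $\mathrm{Gr}(F)$ is product-measurable — which rests on the lower semicontinuity of $c$, the continuity of the marginal map $\Theta$, and the measurability of $\omega_0 \mapsto (m(\omega_0),\rho(\omega_0))$; once these are secured, the nonemptiness and compactness of the fibres and the invocation of the selection theorem are routine.
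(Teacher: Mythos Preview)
Your argument is correct and follows essentially the same measurable selection strategy as the paper. The only cosmetic difference is that the paper factors through a \emph{universal} Borel selector $\phi:\mathcal{P}(\R^d)\times\mathcal{P}(\R^d)\to\mathcal{P}(\R^{2d})$ with $\phi(\mu,\nu)\in\Pi_{\mathrm{opt}}(\mu,\nu)$ (invoking Lemma~12.1.7 of \cite{stroock1997multidimensional} for the measurability of the $\argmin$ over compacta, then composing with $(m,\rho)$), whereas you work directly over the base $\Omega^0$ and exhibit the graph of the $\argmin$ multifunction by hand as $\mathcal{G}\cap\{c\le v\}$; both routes land on the same selection theorem and the same conclusion.
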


\begin{proof}
    The purpose of the proof is to show that there exists a measurable function $\phi : \mathcal{P}(\R^d) \times \mathcal{P}(\R^d) \to \mathcal{P}(\R^d)$, such that for all $(m,\rho) \in \mathcal{P}(\R^d) \times \mathcal{P}(\R^d) \to \mathcal{P}(\R^d)$, $\phi(m,\rho) \in \Pi_{\text{opt}}(m,\rho)$, the set of minimizers for the transport problem. First of all, it is known that $\Pi_{\text{opt}}$ is never empty for the quadratic cost, see \cite[Chap. 4, Thm 4.1]{villani2009optimal}. Then, we need to show that the set-valued function
    \begin{equation}\nonumber
    \Phi: 
        \begin{array}{ll}
            \mathcal{P}(\R^d) \times \mathcal{P}(\R^d) \to 2^{\mathcal{P}(\R^d)} \\
            (m, \rho) \mapsto \Pi(m, \rho),
        \end{array}
    \end{equation}
    is measurable, where $2^A$ stands for the set of subsets of $A$. Considering the graph $\Gamma_{\Phi}$, of $\Phi$:
    \begin{equation}\nonumber
        \Gamma_{\Phi} = \left\{ \left( (m,\rho), \xi\right), \: \xi \in \Phi(m,\rho)\right\},
    \end{equation}
    it is clear that it is a closed set, and then measurable. In particular, the multi-application $\Phi$ is measurable.
    Moreover, by \cite[Lemma 12.1.7]{stroock1997multidimensional}, the application $\Psi$ which associate to any compact set $K \in \mathcal{P}(\R)^2$ the set
\begin{align*}
	\Psi(K) = \arg\inf_{\pi \in K}\int_{\R^2} |x-y| \pi(dx,dy) 
\end{align*}
is measurable. Then the function 
    \begin{equation}\nonumber
    \Lambda: 
        \begin{array}{ll}
            \mathcal{P}(\R^d) \times \mathcal{P}(\R^d) \to \mathrm{Ens}(\mathcal{P}(\R^d)) \\
            (m, \rho) \mapsto \Pi_{\text{opt}}(m, \rho),
        \end{array}
    \end{equation}
    is a compound of two measurable functions. Finally, by the measurable selection theorem, there exists a measurable function $\phi$ such that for all $(m,\rho) \in \mathcal{P}(\R^d) \times \mathcal{P}(\R^d)$, $\phi(m, \rho) \in \Pi_{\text{opt}}(m, \rho)$. \end{proof}

\end{document}